\documentclass[11pt,reqno]{amsart}
\usepackage[T1]{fontenc}
\usepackage{lmodern,microtype}
\usepackage{amsmath,amssymb,amsthm,mathrsfs}
\usepackage[top=1.25in,bottom=1.22in,left=1.25in,right=1.25in]{geometry}
\usepackage{hyperref}
\hypersetup{hidelinks,pdfauthor={Shanlin Guan and Yuhang Zhao},pdftitle={Rigidity of Three-Dimensional Self-Shrinkers}}
\allowdisplaybreaks[1]
\numberwithin{equation}{section}
\newtheorem{theorem}{Theorem}[section]
\newtheorem{lemma}[theorem]{Lemma}
\newtheorem{proposition}[theorem]{Proposition}
\newcommand{\R}{\mathbb R}
\newcommand{\tr}{\operatorname{tr}}
\newcommand{\calR}{\mathcal R}
\newcommand{\calC}{\mathcal C}
\newcommand{\calL}{\mathcal L}
\newcommand{\hatY}{\widehat Y}
\title[Rigidity of three-dimensional self-shrinkers]{Rigidity of Three-Dimensional Self-Shrinkers with Constant Norm of the Second Fundamental Form}
\author[S. Guan]{Shanlin Guan}
\address{School of Mathematical Sciences, Chongqing Normal University, Chongqing 401331, P. R. China}
\email{20269003@cqnu.edu.cn}
\author[Y. Zhao]{Yuhang Zhao}
\address{School of Mathematics and Statistics, Hainan University, Haikou 570228, P. R. China}
\email{yuhangzhao@hainanu.edu.cn}
\thanks{Y. H. Zhao was supported by a start-up grant from Hainan University (XJ2600004145).}
\subjclass[2020]{Primary 53E10; Secondary 53C42, 53C24}
\keywords{self-shrinker, constant norm of the second fundamental form, rigidity, mean curvature flow}
\begin{document}
\begin{abstract}
Let $X:M^3\to\R^4$ be a complete, connected self-shrinker without boundary, and suppose that the squared norm $S$ of its second fundamental form is constant. We prove that $S=0$ or $S=1$. Accordingly, the image is a hyperplane through the origin or a generalized cylinder $\mathbb S^k(\sqrt{k})\times\R^{3-k}$, $1\le k\le3$, and the immersion is a Riemannian covering onto its image.
\end{abstract}
\maketitle
\enlargethispage{2pt}

\section{Introduction}

Self-shrinkers are self-similar solutions of mean curvature flow and arise naturally in the analysis of its singularities; see Huisken \cite{Huisken1990,Huisken1993} and Colding--Minicozzi \cite{ColdingMinicozzi2012}. Let $X:M^n\to\R^{n+1}$ be an immersed hypersurface, with second fundamental form $h$ and mean curvature $H=\tr h$ relative to a local unit normal $\nu$. We consider self-shrinkers satisfying
\begin{equation}\label{eq:shrinker}
H=-\langle X,\nu\rangle.
\end{equation}
Identifying $h$ with its shape operator, write $S=|h|^2$, $f_m=\tr(h^m)$, and $\sigma_2(h)=(H^2-S)/2$. The hyperplanes through the origin and the generalized cylinders $\mathbb S^k(\sqrt{k})\times\R^{n-k}$ are standard examples; they have $S=0$ and $S=1$, respectively. For the geometry of complete self-shrinkers, including the relation between properness and volume growth, see Ding and Xin \cite{DingXin2013} and Cheng and Zhou \cite{ChengZhou2013}; a drift maximum principle was established by Cheng and Peng \cite{ChengPeng2015}. 

The constant-$S$ classification problem is analogous to Chern's problem for closed minimal hypersurfaces in the unit sphere, which asks whether, for each fixed dimension, the possible constant values of $S$ form a discrete set; see \cite{XuXu2017}. In the spherical minimal setting, the scalar curvature is $R=n(n-1)-S$, and Simons' identity is $\frac12\Delta S=|\nabla h|^2+S(n-S)$. For Euclidean self-shrinkers, the corresponding identities are $R=H^2-S$ and $\frac12\calL S=|\nabla h|^2+S(1-S)$, where $\calL=\Delta-\langle X^T,\nabla\cdot\rangle$ on functions. The mean curvature in the Gauss equation and the drift term in Simons' identity distinguish the two problems. For constant $S$, the drift Simons identity yields $|\nabla h|^2=S(S-1)$. In particular, $0<S<1$ is impossible, while $S=1$ corresponds to parallel second fundamental form and hence to the generalized cylinders. The basic rigidity theorems for self-shrinkers were established by Le and \v{S}e\v{s}um \cite{LeSesum2011} and Cao and Li \cite{CaoLi2013}. Further pinching and rigidity results were obtained by Ding and Xin \cite{DingXin2014}, Xu and Xu \cite{XuXu2017}, Ding \cite{Ding2018},  Lei, Xu and Xu \cite{LeiXuXu2020}, the second author \cite{Zhao2025}, and  Li and the second author \cite{Zhao2026}. The classification problem for constant $S$ is therefore concentrated on the range $S>1$.

In dimension two, Ding and Xin \cite{DingXin2014} classified complete self-shrinkers with constant $S$ under polynomial volume growth; Cheng and Ogata \cite{ChengOgata2016} obtained the same classification for complete immersions. In dimension three, Cheng, Li and Wei proved the classification when $S$ and $f_4$ are constant \cite{ChengZLiWei2022}, and subsequently when $S$ and $f_3$ are constant \cite{ChengZLiWei2024}. In arbitrary dimensions, Cheng and Wei \cite{ChengWei2015} obtained rigidity for constant $S\le10/7$ under polynomial volume growth. Cheng, Wei and Yano \cite{ChengWeiYano2023} obtained a higher-dimensional gap theorem under constant $S$ and $f_3$; related rigidity results for $\lambda$-hypersurfaces appear in Cheng and Li \cite{ChengLi2026lambda}. More recently, Cheng, Li and Wei \cite{ChengLiWei2026} proved the classification for constant $S\le7/5$, and also for constant $S$ under the scalar-curvature bound $R=H^2-S\ge-7/5$. They subsequently classified complete self-shrinkers with positive constant scalar curvature \cite{ChengLiWei2026classification}. Cheng and Li \cite{ChengLi2026gap} proved the classification for constant $S\le10/7$, together with the corresponding result under constant $S$ and $R\ge-10/7$.

Brendle \cite{Brendle2016} proved that every compact embedded genus-zero self-shrinker in $\R^3$ is a round sphere. Andrews, Chow, Guenther and Langford \cite[Sec.~13.15.2, Open Problem~13.76]{AndrewsChowGuentherLangford2020} ask whether every compact embedded self-shrinker in $\R^4$ with the topology of $\mathbb S^3$ must be the round sphere $\mathbb S^3(\sqrt3)$. Cheng, Lai and Wei \cite[Remark~1.2]{ChengLaiWei2024} discuss the corresponding uniqueness conjecture for embedded self-shrinkers diffeomorphic to a sphere in higher dimensions. For complete three-dimensional self-shrinkers with constant $S$, we obtain the following classification.

\begin{theorem}\label{thm:main}
Let $X:M^3\to\R^4$ be a smooth, complete, connected self-shrinker without boundary. If $S=|h|^2$ is constant, then $S\in\{0,1\}$. Up to an orthogonal transformation, the image of $X$ is a hyperplane through the origin or
\[
\mathbb S^k(\sqrt{k})\times\R^{3-k},\qquad 1\le k\le3.
\]
In each case $X$ is a Riemannian covering onto its image.
\end{theorem}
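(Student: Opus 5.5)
We argue by contradiction on the range $S>1$: we derive pointwise algebraic constraints from successive drift identities and show they are incompatible in dimension three. First we record the standard identities for self-shrinkers. Since $\calL S$ is the drift Laplacian and $S$ is constant, the drift Simons identity gives $|\nabla h|^2=S(S-1)$. Hence $S<1$ is impossible, $S=0$ gives a hyperplane through the origin, and $S=1$ gives $\nabla h\equiv0$. In the parallel case the classical Lawson-type argument shows that locally $M$ is a product of a sphere and a Euclidean factor. The shrinker equation then fixes the radius $\sqrt k$ and forces the Euclidean factor through the origin. Completeness and connectedness make the image globally $\mathbb S^k(\sqrt k)\times\R^{3-k}$, and $X$ is a local isometry onto it; since a complete local isometry onto a complete manifold is a covering, $X$ is a Riemannian covering. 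So the whole content is excluding $S>1$.

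For $S>1$ we work with the drift operator on higher-order quantities. We compute
\[
\tfrac12\calL|\nabla h|^2=|\nabla^2h|^2+(2-S)|\nabla h|^2+(\text{cubic terms in } h,\nabla h)
\]
and use that the left side vanishes because $|\nabla h|^2=S(S-1)$ is constant. We also use $\calL H=H(1-S)$ and $\calL f_3=3(1-S)f_3+6\,h_{ij}h_{jk,l}h_{ki,l}$ (up to convention). On the open dense set where the principal curvatures $\lambda_1,\lambda_2,\lambda_3$ have locally constant multiplicity, we choose a principal frame. Differentiating $S=\sum\lambda_i^2$ gives $\sum_i\lambda_i h_{iik}=0$ for each $k$. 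Differentiating $|\nabla h|^2$ gives $\sum h_{ijk}h_{ijkl}=0$. In dimension three the components $h_{ijk}$ reduce to a small set of unknowns: $h_{111},h_{222},h_{333},h_{112},\dots,h_{123}$, subject to these three linear relations. This lets us express $|\nabla^2 h|^2$ from below by an explicit quadratic/quartic form in the $\lambda_i$ and $h_{ijk}$. That lower bound comes by Cauchy--Schwarz on the components $h_{ijkl}$ that are forced by the Ricci identity $h_{ijkl}-h_{ijlk}=h_{im}R_{mjkl}+h_{mj}R_{mikl}$, together with $R_{ijij}=\lambda_i\lambda_j$.

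The plan is then a case split on $H$. If $H\equiv0$ on an open set, the shrinker equation makes $X$ tangent there, and $\calL H=H(1-S)$ is trivially satisfied. The point instead is that $\nabla H=h(X^T)$; differentiating it and combining with $|\nabla h|^2=S(S-1)$ yields a contradiction via an algebraic inequality for traceless $3\times3$ matrices (Okumura's inequality $|f_3|\le S^{3/2}/\sqrt6$). If $H\ne0$, we integrate the identities along the flow of $X^T$, or equivalently use a generalized maximum principle at a point where $H^2$ or $f_3$ attains its supremum. Such a sequence exists because $S$ constant bounds $h$, so Cheng--Peng's drift maximum principle applies without volume-growth assumptions. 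At the limit point $\nabla H\to0$ and $\nabla f_3\to0$ give extra linear constraints on $h_{ijk}$. Substituting these into the $|\nabla^2h|^2$ identity produces a polynomial inequality in $(\lambda_1,\lambda_2,\lambda_3)$ on the sphere $\sum\lambda_i^2=S$ with $S>1$. We then show this inequality has no solutions.

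The main obstacle is this last algebraic step. The available lower bound for $|\nabla^2h|^2$ must beat the negative cubic terms for every $S>1$, not merely for $S\le 10/7$ as in earlier work. I would first try to apply the maximum principle simultaneously to $H$ and $f_3$, taking the sup of $f_3$ along a sequence on which $H$ is also nearly extremal, so as to gain two extra constraints on $\nabla h$. I would then use the three-dimensional degree count to eliminate all $h_{ijk}$ except one or two free parameters. If the resulting inequality still fails to close for large $S$, the fallback is to also exploit $\calL f_4$ at the same sequence. This reduces to the Cheng--Li--Wei classification under constant $S$ and $f_4$ \cite{ChengZLiWei2022}, once we show that $f_4$ must be locally constant.
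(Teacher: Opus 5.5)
Your handling of $S\le1$ is fine (the excluded range is $0<S<1$, not $S<1$), and so is the covering argument in the parallel case. The gap is that the exclusion of $S>1$, which is the whole content of the theorem, is never carried out. You describe a search and concede that the final polynomial inequality may not close. The plan is essentially the technique behind the earlier thresholds $S\le7/5$ and $S\le10/7$, and nothing in it explains why it should now work for every $S>1$.

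Concretely, a supremum sequence for $H^2$ gives only $\nabla H\to0$. The drift inequality there is vacuous for $S>1$, because $\calL H^2=2(1-S)H^2+2|\nabla H|^2$. Such a sequence also gives no control of $X^T$, so the unknown tensor $\nabla_{X^T}h$ persists in $\nabla^2H=\nabla_{X^T}h+h-Hh^2$ and in $\Delta h$. It gives no sign information on the spectrum of $h$ either. The fallback through \cite{ChengZLiWei2022} presupposes that $f_4$ is constant, for which you offer no mechanism. The $H\equiv0$ branch via Okumura's inequality is not an argument as written.

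Three ideas are missing.
(i) An a priori bound $\sup_M|X|<\infty$ when $S>1$ (Proposition~\ref{prop:boundedposition}). One shows $|\nabla H|\ge\eta$ wherever $|X^T|$ is large; the degenerate limits are excluded by the two trace conditions and by \eqref{eq:pengterng}. One then follows $-\nabla H/|\nabla H|$ long enough to contradict $|H|\le\sqrt{3S}$.

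(ii) The Hessian maximum principle applied to $|X|^2/2$ rather than to $H$ or $f_3$. Since $\nabla^2(|X|^2/2)=g-Hh$, this gives a limit with $X^T\to0$ and $Hh\ge I$. Hence $h>0$, $\sigma_2(h)\ge1$, and both $\sum_ih_{iik}$ and $\sum_i\lambda_ih_{iik}$ vanish. The ordinary and $h$-weighted traces of the symmetrized Hessian $U$ also become explicit. This removes any need for a case split on $H$.

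(iii) A second, $h$-weighted fourth-order identity \eqref{eq:fourth}, obtained from $D_hN=0$. Its leading term $\sum\lambda_lh_{ijkl}^2$ is a genuine squared norm precisely because $h>0$.

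Combining the two fourth-order identities with the trace decomposition of $U$ yields $Q_h(G)\le0$. Minimizing $Q_h$ over $\tr G=N$ under $h>0$ and $\sigma_2\ge1$ gives $Q_h\ge0$, with equality only when $S=1$. A single identity for $|\nabla^2h|^2$, combined with Ricci-identity lower bounds at a supremum point of $H$ or $f_3$, does not reach this conclusion.
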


In particular, a compact self-shrinker satisfying the hypotheses of Theorem~\ref{thm:main} has image $\mathbb S^3(\sqrt3)$. To exclude $S>1$, we first derive boundedness of the immersed image from the constant-$S$ identities and completeness. The Hessian maximum principle applied to $|X|^2/2$ then produces a limit at which $h>0$ and $Hh\ge I$. At this limit, the ordinary and $h$-weighted fourth-order identities concern the same completely symmetric tensor $U=\operatorname{Sym}(\nabla^2h)$. Combining these identities with the ordinary and $h$-weighted traces of $U$ yields a quadratic constraint $Q_h(G)\le0$ for the Gram tensor $G$ of $\nabla h$. The condition $Hh\ge I$ implies $\sigma_2(h)\ge1$, and the fixed-trace minimum of $Q_h$ is nonnegative, with equality only for $S=1$.

\section{Preliminaries}

Passing to a component of the two-sided normal cover preserves completeness, constant $S$, and the image. We may therefore work with a global unit normal. Let $g$ be the induced metric, write $I$ for the identity endomorphism, and use $\nabla$ both for the Levi--Civita connection and the induced tensor connections; $\bar\nabla$ denotes the Euclidean connection. Our conventions are
\[
\bar\nabla_Y\nu=-X_*h(Y),\qquad
\bar\nabla_Y(X_*Z)=X_*(\nabla_YZ)+h(Y,Z)\nu.
\]
For a smooth function or a covariant tensor, $\Delta=\tr\nabla^2$ is the rough Laplacian, and
\[
\calL=\Delta-\nabla_{X^T},
\]
where $X^T$ is the tangential projection of $X$; for functions the last term is $\langle X^T,\nabla\cdot\rangle$. At a fixed point with $h_{ij}=\lambda_i\delta_{ij}$, write $h_{ijk}=h_{ij,k}$ and $h_{ijkl}=h_{ij,kl}$, with the last index denoting the last derivative. All covariant derivatives are taken before diagonalizing $h$. Codazzi makes $h_{ijk}$ symmetric in its three indices; its covariant derivative $h_{ijkl}$ is symmetric in the first three, but generally not in the last two. Our curvature convention is $R^p{}_{iab}=h_{pa}h_{ib}-h_{pb}h_{ia}$. Indices range from $1$ to $n$, all component sums are displayed explicitly, and tensor norms count full ordered index sums. Inequalities between symmetric two-tensors are understood as inequalities of quadratic forms.

The symmetric two-tensor $G$, with $G_{kl}=\langle\nabla_{e_k}h,\nabla_{e_l}h\rangle$, is the Gram tensor of $\nabla h$ and is positive semidefinite. The contractions used below are
\begin{equation}\label{eq:contractions}
\begin{gathered}
N=|\nabla h|^2,\qquad G_{kl}=\sum_{i,j}h_{ijk}h_{ijl},\qquad \tr G=N,\\
A=\sum_{i,j,k}\lambda_i^2h_{ijk}^2=\tr(h^2G),\quad
B=\sum_{i,j,k}\lambda_i\lambda_jh_{ijk}^2,\quad
C=\sum_{i,j,k}\lambda_i h_{ijk}^2=\tr(hG).
\end{gathered}
\end{equation}
The trace expressions for $A$ and $C$ use the complete symmetry of $h_{ijk}$.
Differentiating $H=-\langle X,\nu\rangle$ and using $\bar\nabla_Y\nu=-X_*h(Y)$ gives $\nabla H=h(X^T,\cdot)$. As $\nabla_YX^T=Y-Hh(Y)$, a further derivative and Codazzi yield
\begin{equation}\label{eq:gradH}
\nabla H=h(X^T,\cdot),\qquad
\nabla^2H=\nabla_{X^T}h+h-Hh^2.
\end{equation}
For $\rho=|X|^2/2$, it follows that
\begin{equation}\label{eq:rho}
\nabla\rho=X^T,\qquad \nabla^2\rho=g-Hh.
\end{equation}
We record the drift identities in these conventions; compare \cite{ChengWeiYano2023,DingXin2014,ChengLiWei2026}.

\begin{lemma}\label{lem:drift}
For a self-shrinker satisfying \eqref{eq:shrinker},
\begin{equation}\label{eq:drift}
\begin{gathered}
\calL h=(1-S)h,\qquad \calL H=(1-S)H,\\
\tfrac12\calL S=|\nabla h|^2+S(1-S),\\
\tfrac13\calL f_3=(1-S)f_3+2C,\qquad
\tfrac14\calL f_4=(1-S)f_4+2A+B.
\end{gathered}
\end{equation}
If $S$ is constant, then
\begin{equation}\label{eq:constantS}
N=S(S-1),\qquad \sum_i\lambda_i h_{iik}=0,
\end{equation}
and
\begin{equation}\label{eq:pengterng}
|\nabla^2h|^2=N(S-2)+3(A-2B).
\end{equation}
\end{lemma}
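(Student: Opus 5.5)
We verify the drift identities by direct computation in a local orthonormal frame, with the shrinker relations \eqref{eq:gradH} as input. The plan is to establish $\calL h=(1-S)h$ first and then derive everything else from it by contraction or by Bochner-type expansion.

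\textbf{Step 1: $\calL h$.} Contracting the Codazzi equation twice and commuting covariant derivatives with the Gauss equation gives the usual Simons identity
\[
\Delta h=\nabla^2H+Hh^2-Sh,
\]
with the stated curvature convention $R^p{}_{iab}=h_{pa}h_{ib}-h_{pb}h_{ia}$. Substituting $\nabla^2H=\nabla_{X^T}h+h-Hh^2$ from \eqref{eq:gradH} cancels the $Hh^2$ terms and leaves $\Delta h-\nabla_{X^T}h=(1-S)h$. Taking the trace gives $\calL H=(1-S)H$, since $\calL$ commutes with metric contraction.

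\textbf{Step 2: the quantities $f_m$.} For $f_m=\tr(h^m)$ we use
\[
\calL f_m=m\,\tr(h^{m-1}\calL h)+\text{(gradient terms)}.
\]
The first term contributes $m(1-S)f_m$. For $m=2$ the gradient term is $2|\nabla h|^2$. For $m=3$ it is $3\cdot 2\sum\lambda_i h_{ijk}^2=6C$; after diagonalizing, this uses $\nabla_k\tr(h^3)=3\sum\lambda_i^2h_{iik}$ and expands $\Delta$ of the cubic trace. For $m=4$ it is $4\bigl(\sum(\lambda_i^2+\lambda_i\lambda_j+\lambda_j^2)h_{ijk}^2\bigr)=4(2A+B)$, using the $i\leftrightarrow j$ symmetry. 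Dividing by $m$ gives \eqref{eq:drift}.

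\textbf{Step 3: constant $S$.} Since $\calL S=0$, we get $N=S(S-1)$. Moreover $0=\nabla_kS=2\sum_i\lambda_ih_{iik}$.

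\textbf{Step 4: \eqref{eq:pengterng}.} This is the Peng--Terng style step and the main obstacle. Apply $\frac12\calL$ to $N=|\nabla h|^2$, which is constant, so the left side is $0$. This gives
\[
0=|\nabla^2h|^2+\langle\nabla h,\calL\nabla h\rangle.
\]
We then compute $\calL\nabla h$ by commuting $\nabla$ past $\calL$:
\[
\nabla_k(\calL h)=(1-S)\nabla_kh
\]
is used with $\nabla S=0$. There are three kinds of commutator terms: the Ricci identity terms (cubic in $h$, linear in $\nabla h$), the derivative of the drift producing $\nabla_{e_k}X^T=e_k-Hh(e_k)$, and curvature terms from $\Delta\nabla h-\nabla\Delta h$. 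In the drift commutator, the $e_k$ part contributes $-N$, and the $Hh$ part should cancel against the $H$-terms coming from Ricci $=Hh-h^2$. What remains is
\[
\langle\nabla h,\calL\nabla h\rangle=(1-S)N-N+\text{(quartic contractions)}.
\]
The quartic contractions are expressed via $A$, $B$, $S N$, and a term $\sum\lambda_ih_{iik}(\cdot)$, which vanishes by Step 3. The bookkeeping goal is that the curvature terms total $3(2B-A)+S N$ up to sign, which would yield $|\nabla^2h|^2=N(S-2)+3(A-2B)$.

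The hard part is tracking the signs and the factor $3$, which comes from symmetrizing over the three slots of $h_{ijk}$. Because of that, I would carry out the check in the diagonal frame, component by component, and compare the result with the known formula in \cite{ChengWeiYano2023}.
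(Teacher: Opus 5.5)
Steps 1--3 are correct and are exactly the paper's argument. Step 4 follows the paper's route: apply $\frac12\calL$ to the constant $N$ and commute $\nabla$ past $\calL$. However, the bookkeeping you state is wrong in two places, so as written it does not yield \eqref{eq:pengterng}.

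\textbf{The sign of the drift term.} In a principal frame, $\nabla_k(\nabla_{X^T}h)_{ij}=(1-H\lambda_k)h_{ijk}+\sum_p(X^T)^ph_{ij,pk}$, whereas $(\nabla_{X^T}\nabla h)_{ijk}=\sum_p(X^T)^ph_{ij,kp}$. Hence
\[
(\calL\nabla h)_{ijk}=\nabla_k(\calL h)_{ij}+(\Delta\nabla h-\nabla\Delta h)_{ijk}+(1-H\lambda_k)h_{ijk}+\sum_p(X^T)^p\big(h_{ij,pk}-h_{ij,kp}\big).
\]
So the identity part of $\nabla X^T$ contributes $+N$ to $\langle\nabla h,\calL\nabla h\rangle$, and the coefficient of $N$ is $2-S$, not $(1-S)-1$. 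With your sign, even a correct curvature computation ends at $|\nabla^2h|^2=SN+3(A-2B)$, which is off by $2N$. Your target ``$3(2B-A)+SN$ up to sign'' fits neither version: the correct curvature total is exactly $3(2B-A)$, with no $SN$ term.

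\textbf{The list of surviving terms is incomplete.} The last term in the display is a curvature term linear in $X^T$, not in $\nabla h$, and you do not account for it. On the other side, the terms $\nabla_aR^p{}_{ika}$ in $\Delta\nabla h-\nabla\Delta h$, computed from the derivative of Gauss, contain $\sum_ah_{iaa}=H_i$. After contraction with $h_{ijk}$ and use of $\sum_j\lambda_jh_{jji}=0$, they leave $\frac23\langle\nabla H,\nabla f_3\rangle$. This is not among $A,B,SN$, nor of the vanishing type you mention.

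The missing step is the following. By \eqref{eq:gradH}, $H_i=\lambda_i(X^T)_i$, which turns the $X^T$-curvature term into
\[
-\delta_{jk}\lambda_j(\lambda_j-\lambda_i)H_i-\delta_{ik}\lambda_i(\lambda_i-\lambda_j)H_j .
\]
This cancels those $\nabla H$ terms exactly. Combined with the cancellation of $-H\lambda_kh_{ijk}$ against the Ricci term, which you did anticipate, this gives
\[
(\calL\nabla h)_{ijk}=\big(2-S+(\lambda_i+\lambda_j+\lambda_k)^2-2(\lambda_i^2+\lambda_j^2+\lambda_k^2)\big)h_{ijk}-h_{ij}S_k-h_{jk}S_i-h_{ki}S_j .
\]
By complete symmetry, its contraction with $h_{ijk}$ is $(2-S)N-3A+6B-\frac32|\nabla S|^2$. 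Only then do $\calL N=0$ and $\nabla S=0$ give \eqref{eq:pengterng}. Deferring this computation to a comparison with the literature leaves that identity unproved.
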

\begin{proof}
The Simons equation $\Delta h=\nabla^2H+Hh^2-Sh$, combined with \eqref{eq:gradH}, gives $\calL h=(1-S)h$. Its trace proves the identity for $H$. For $h_k=\nabla_{e_k}h$, the product rule applied to $f_m=\tr(h^m)$, $m\ge2$, yields
\[
\frac1m\calL f_m=(1-S)f_m+
\sum_k\sum_{a=0}^{m-2}\tr(h^ah_kh^{m-2-a}h_k).
\]
For $m=3$, the last sum is $2\sum_{i,j,k}\lambda_i h_{ijk}^2=2C$; for $m=4$, the three summands give $A+B+A$. The case $m=2$ gives the identity for $S$. If $S$ is constant, then
\[
0=\tfrac12S_k=\sum_i\lambda_i h_{iik},
\qquad 0=\tfrac12\calL S=N+S(1-S),
\]
which establishes \eqref{eq:constantS}.

For the second derivative identity, put $v=X^T$. At a principal frame, differentiating $\calL h=(1-S)h$ and commuting the Laplacian and covariant derivative gives
\begin{equation}\label{eq:drift-derivative-commute}
\begin{aligned}
(\calL\nabla h)_{ijk}={}&(2-S)h_{ijk}-H\lambda_kh_{ijk}-h_{ij}S_k
+\mathscr K_{ijk}\\
&-\sum_{a,p}v^p\big(R^a{}_{ikp}h_{aj}+R^a{}_{jkp}h_{ia}\big),
\end{aligned}
\end{equation}
where $\mathscr K_{ijk}=\sum_a(h_{ij,kaa}-h_{ij,aak})$ and $\nabla_kv^p=\delta_{kp}-Hh_{kp}$. The Ricci commutation formula for the second and third covariant derivatives is
\begin{equation}\label{eq:third-order-ricci}
\begin{aligned}
h_{ij,kaa}-h_{ij,aak}
=\sum_p\big[& (\nabla_aR^p{}_{ika})h_{pj}+(\nabla_aR^p{}_{jka})h_{ip}\\
&+2R^p{}_{ika}h_{pja}+2R^p{}_{jka}h_{ipa}
+R^p{}_{aka}h_{ijp}\big].
\end{aligned}
\end{equation}
With this curvature convention, the commutator in \eqref{eq:drift-derivative-commute} becomes, in a principal frame,
\[
\begin{aligned}
\mathscr K_{ijk}={}&\big[2\lambda_i\lambda_j-\lambda_i^2-\lambda_j^2
+2\lambda_k(\lambda_i+\lambda_j)+\lambda_k(H-\lambda_k)\big]h_{ijk}\\
&+\delta_{jk}\big[\lambda_j(\lambda_j-\lambda_i)H_i-\lambda_jS_i\big]\\
&+\delta_{ik}\big[\lambda_i(\lambda_i-\lambda_j)H_j-\lambda_iS_j\big],
\end{aligned}
\]
where $H_i=\lambda_i v_i$ and $S_i=2\sum_a\lambda_ah_{aai}$. This follows by inserting Gauss and its derivative in \eqref{eq:third-order-ricci}. The last term of \eqref{eq:drift-derivative-commute} is
\[
-\sum_{a,p}v^p\big(R^a{}_{ikp}h_{aj}+R^a{}_{jkp}h_{ia}\big)
=-\delta_{jk}\lambda_j(\lambda_j-\lambda_i)H_i
-\delta_{ik}\lambda_i(\lambda_i-\lambda_j)H_j.
\]
The terms containing $H_i,H_j$ cancel, and the remaining coefficients simplify to
\[
\begin{aligned}
(\calL\nabla h)_{ijk}={}&\Big(2-S+(\lambda_i+\lambda_j+\lambda_k)^2
-2(\lambda_i^2+\lambda_j^2+\lambda_k^2)\Big)h_{ijk}\\
&-h_{ij}S_k-h_{jk}S_i-h_{ki}S_j.
\end{aligned}
\]
For its contraction with $h_{ijk}$, complete symmetry gives
\[
\begin{aligned}
&\sum_{ijk}\big[(\lambda_i+\lambda_j+\lambda_k)^2
-2(\lambda_i^2+\lambda_j^2+\lambda_k^2)\big]h_{ijk}^2
=-3A+6B,\\
&\sum_{ijk}(h_{ij}S_k+h_{jk}S_i+h_{ki}S_j)h_{ijk}
=3\sum_k S_k\sum_i\lambda_i h_{iik}=\tfrac32|\nabla S|^2.
\end{aligned}
\]
The drift product rule for $|\nabla h|^2$ therefore yields
\[
\tfrac12\calL N
=|\nabla^2h|^2+(2-S)N-3(A-2B)-\tfrac32|\nabla S|^2.
\]
For constant $S$, one has $\calL N=0$ and $\nabla S=0$, so rearranging gives \eqref{eq:pengterng}.
\end{proof}

We recall the Hessian maximum principle in the form needed below (cf. \cite{Omori1967,Petersen2016}).
\begin{lemma}\label{lem:omori}
Let $(M,g)$ be complete, connected and without boundary, with sectional curvature bounded below. If $u\in C^2(M)$ is bounded below, there are points $p_j$ and $\varepsilon_j\downarrow0$ such that
\[
u(p_j)\to\inf_Mu,\qquad
|\nabla u|(p_j)\le\varepsilon_j,\qquad
\nabla^2u(p_j)\ge-\varepsilon_jg.
\]
For a bounded-above function, the inequalities are reversed for the Hessian and the values approach the supremum.
\end{lemma}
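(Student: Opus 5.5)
We prove this by the standard perturbation argument: add to $u$ a small multiple of a proper function built from the distance to a fixed point, minimize, and read off first- and second-order conditions at the minimum. We control the barrier by Hessian comparison, and handle the cut locus with Calabi's support-function trick. Fix $o\in M$ and let $r=d(o,\cdot)$. Since $M$ is complete, closed metric balls are compact (Hopf--Rinow). Let $K\ge0$ be such that $\sec\ge-K$. Put $\varphi=\sqrt{1+r^2}$. It is proper, and as a function of $r^2$ it is smooth near $o$; moreover $0\le\varphi'(r)\le1$ and $0\le\varphi''(r)\le1$.

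\emph{Choice of points.} Let $m=\inf_Mu>-\infty$ and pick $x_j$ with $u(x_j)<m+1/j$. Choose $\varepsilon_j'\downarrow0$ with $\varepsilon_j'\varphi(x_j)\le1/j$. The function $u_j=u+\varepsilon_j'\varphi$ is bounded below by $m+\varepsilon_j'\varphi$, hence tends to $+\infty$ at infinity, so it attains a minimum at some $p_j$. Then
\[
m\le u(p_j)\le u_j(p_j)\le u_j(x_j)\le m+2/j,
\]
so $u(p_j)\to m$.

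\emph{Smoothing at the cut locus.} If $p_j$ is a cut point of $o$ (or $p_j=o$), we use Calabi's trick. Take a minimizing geodesic $\gamma$ from $o$ to $p_j$ and a small $\delta>0$, and set $\tilde r=\delta+d(\gamma(\delta),\cdot)$. By the triangle inequality $\tilde r\ge r$, with equality at $p_j$, and $\tilde r$ is smooth near $p_j$. Because $\varphi$ is increasing, $p_j$ remains a local minimum of $u+\varepsilon_j'\varphi(\tilde r)$. (If $p_j=o$, we use $\varphi$ directly, since it is smooth there.) At $p_j$ we therefore get
\[
\nabla u=-\varepsilon_j'\varphi'\nabla\tilde r,\qquad
\nabla^2u\ge-\varepsilon_j'\big(\varphi''\,d\tilde r\otimes d\tilde r+\varphi'\nabla^2\tilde r\big).
\]
This gives $|\nabla u|(p_j)\le\varepsilon_j'$.

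\emph{Hessian comparison.} The key step is a uniform bound on $\varphi'\nabla^2\tilde r$. By Hessian comparison under $\sec\ge-K$,
\[
\nabla^2\tilde r\le\sqrt K\coth\big(\sqrt K(\tilde r-\delta)\big)\,g.
\]
Since $\varphi'=r/\sqrt{1+r^2}$, the product $\varphi'(r)\sqrt K\coth(\sqrt K r)$ is bounded by a constant $C=C(K)$: near $r=0$ it is at most $C(1+r)\cdot r/r$, and for large $r$ it is at most $\sqrt K\coth(\sqrt K)$. Letting $\delta\to0$ in the local estimate, or taking $\delta$ small relative to $r(p_j)$ when $r(p_j)>0$, we obtain $\nabla^2u(p_j)\ge-\varepsilon_j'(1+C)g$. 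Setting $\varepsilon_j=(1+C)\varepsilon_j'$ gives all three conclusions.

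\emph{Expected obstacle.} The main technical point is keeping this barrier estimate uniform, both near $o$, where $\nabla^2r$ blows up, and across cut points. The choice $\varphi=\sqrt{1+r^2}$ handles the first issue and the support function $\tilde r$ handles the second. The bounded-above version follows by applying the result to $-u$.
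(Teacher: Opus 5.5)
Your proposal is correct and takes essentially the same route as the paper: the barrier $\sqrt{1+r^2}$, minimization of $u$ plus a small multiple of it, Calabi's upper support $\delta+d(\gamma(\delta),\cdot)$ at cut points, and Hessian comparison to bound $\varphi'\nabla^2 r$ uniformly. The only difference is cosmetic: you choose $\varepsilon_j'$ adapted to near-minimizing points $x_j$, whereas the paper compares the minimum with an arbitrary point $q$ and then lets $\delta\downarrow0$.
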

\begin{proof}
Fix $o\in M$ and let $r(p)=d(o,p)$. Choose $\kappa\ge0$ such that $\sec_M\ge-\kappa^2$. We take
\[
\psi(p)=\sqrt{1+r(p)^2}.
\]
Away from $o$ and its cut locus, Hessian comparison yields
\[
\nabla^2r\le\kappa\coth(\kappa r)(g-dr\otimes dr)
\le(1/r+\kappa)(g-dr\otimes dr),
\]
with $\kappa\coth(\kappa r)=1/r$ if $\kappa=0$. The derivatives of the function of $r$ defining $\psi$ are
\[
\psi'(r)=\frac{r}{\sqrt{1+r^2}},\qquad
\psi''(r)=\frac1{(1+r^2)^{3/2}}.
\]
Consequently
\begin{equation}\label{eq:omori-barrier}
|\nabla\psi|\le1,\qquad
\nabla^2\psi=\psi''dr\otimes dr+\psi'\nabla^2r
\le(1+\kappa)g.
\end{equation}
Here the radial coefficient is at most $1$ and the tangential coefficient is at most $(1+\kappa r)/\sqrt{1+r^2}\le1+\kappa$. The function $\psi$ is smooth near $o$, since $r^2$ is smooth there.

At a cut point $p\ne o$, let $\gamma:[0,r(p)]\to M$ be a minimizing geodesic from $o$ to $p$. For sufficiently small $\eta>0$, the Calabi distance $r_\eta(x)=\eta+d(\gamma(\eta),x)$ is smooth near $p$ and is an upper support for $r$ there. Thus $\sqrt{1+r_\eta^2}$ is an upper support for $\psi$. Applying Hessian comparison to this support, and then letting $\eta\downarrow0$, gives the same bound \eqref{eq:omori-barrier} for the minimum test at $p$. Indeed, if $u+\delta\psi$ has a minimum at $p$, then $u+\delta\sqrt{1+r_\eta^2}$ has a local minimum there as well, since the support lies above $\psi$ and agrees with it at $p$.

For $\delta>0$, the function $u+\delta\psi$ attains its minimum at a point $p_\delta$, since $u$ is bounded below and $\psi$ is proper by Hopf--Rinow. The first- and second-derivative tests, using the upper support at cut points, give
\begin{equation}\label{eq:omori-estimates}
|\nabla u|(p_\delta)\le\delta,\qquad
\nabla^2u(p_\delta)\ge-(1+\kappa)\delta g.
\end{equation}
For any fixed $q\in M$, minimality and $\psi\ge0$ yield
\[
\inf_Mu\le u(p_\delta)
\le u(q)+\delta\big(\psi(q)-\psi(p_\delta)\big)
\le u(q)+\delta\psi(q).
\]
Let $\delta\downarrow0$ and then take the infimum over $q$ to obtain $u(p_\delta)\to\inf_Mu$. Absorbing $1+\kappa$ into a sequence $\varepsilon_j\downarrow0$ proves the assertion. The bounded-above case follows by applying it to $-u$.
\end{proof}

For constant $S$, the Gauss equation gives $\sec_M\ge-S/2$. Thus the Hessian maximum principle applies to the position function $\rho=|X|^2/2$ once its boundedness has been established.

\section{A positive limiting spectrum}

Henceforth $n=3$. The identity \eqref{eq:pengterng} determines the ordinary norm of $\nabla^2h$. A contraction with $h$ gives a second fourth-order identity, whose leading term becomes a squared norm at the positive limit. Put
\[
A_3=\sum_{i,j,k}\lambda_i^3h_{ijk}^2,\qquad
B_3=\sum_{i,j,k}\lambda_i^2\lambda_jh_{ijk}^2,
\qquad \mathscr J=\sum_{i,j,k,l,m}h_{ijkl}h_{ijm}h_{klm}.
\]
For a covariant tensor $T$, define $D_hT=\sum_{a,b}h_{ab}(\nabla^2T)(\cdot,e_a,e_b)$, where the dots denote the tensor slots of $T$.
\begin{lemma}\label{lem:fourth}
If $S$ is constant, then
\begin{equation}\label{eq:fourth}
\sum_{i,j,k,l}\lambda_lh_{ijkl}^2-3\mathscr J
=f_3N-3SC+3A_3-6B_3.
\end{equation}
\end{lemma}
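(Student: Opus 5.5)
The plan is to obtain \eqref{eq:fourth} as the $h$-weighted analogue of the computation behind \eqref{eq:pengterng}. Instead of the drift Laplacian of $N$ we use the contraction $D_hN=\sum_{a,b}h_{ab}N_{,ab}$ of its Hessian against $h$. For constant $S$, \eqref{eq:constantS} gives $N=S(S-1)$, so $N$ is constant, $\nabla^2N=0$, and hence $D_hN=0$. At a principal frame,
\[
0=\tfrac12D_hN=\sum_{i,j,k,l}\lambda_lh_{ijkl}^2+\sum_{i,j,k,a}\lambda_a h_{ijk}h_{ijk,aa}.
\]
The first sum is exactly the leading term of \eqref{eq:fourth}. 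The work is to evaluate the second sum, $\sum_{i,j,k}h_{ijk}(D_h\nabla h)_{ijk}$, in terms of $f_3N$, $SC$, $A_3$, $B_3$ and $\mathscr J$.

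\emph{Step 1 (commuting derivatives).} Using Codazzi and the Ricci identity, replace $h_{ijk,aa}$ by $h_{ija,ka}$ and then by $h_{ija,ak}$, plus curvature terms. The Ricci commutation terms, computed as in \eqref{eq:third-order-ricci} with $R^p{}_{iab}=h_{pa}h_{ib}-h_{pb}h_{ia}$, are cubic in $\lambda$ times $h_{ijk}$, or quadratic in $\lambda$ times $\nabla h$ terms. After weighting by $\lambda_a$ and contracting with $h_{ijk}$, they should produce combinations of $A_3$, $B_3$, $SC$ and $f_3N$, using complete symmetry of $h_{ijk}$ to symmetrize the coefficients as in the proof of Lemma~\ref{lem:drift}.

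\emph{Step 2 (moving the weight inside the derivative).} Write
\[
\sum_a\lambda_a h_{ija,ak}=\nabla_k\Big(\sum_{a,b}h_{ab}h_{ijb,a}\Big)-\sum_{a,b}h_{ab,k}h_{ijb,a}.
\]
The correction term, contracted with $h_{ijk}$, gives $-\sum h_{abk}h_{ija b}h_{ijk}$. This is $-\mathscr J$ up to relabeling, using symmetry of $h_{ijkl}$ in its first three indices. The factor $3$ in $-3\mathscr J$ should come from the three symmetric placements of the differentiated index. For the divergence term, use $\sum_ah_{ija,a}=(\Delta h)_{ij}$ together with the Simons equation and \eqref{eq:gradH}. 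The $h$-weighted version $\sum_a h_{ab}h_{ijb,a}$ is to be rewritten through the divergence of $h\circ\nabla h$ and the second-order consequences of $S$ being constant. Differentiating $\sum_i\lambda_ih_{iik}=0$ once more gives
\[
G_{kl}+\sum_i\lambda_i h_{iikl}=0.
\]
This relation, and its $h$-weighted counterpart, is what converts the remaining fourth-order terms into $\mathscr J$ and into $C$-type terms. Any leftover drift contributions from $\nabla_{X^T}$ should cancel or reduce to terms in $\nabla H=h(X^T,\cdot)$ that vanish after contraction, exactly as the $H_i$ terms cancelled in Lemma~\ref{lem:drift}.

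\emph{Step 3 (collecting terms).} Sum the contributions and check that the coefficients match $f_3N-3SC+3A_3-6B_3$. The expected obstacle is bookkeeping. The non-symmetry of $h_{ijkl}$ in its last two indices produces extra curvature terms at each commutation. One must also confirm that no uncontrolled term such as $\sum\lambda_a^2 h_{ijk}h_{ijk,a}$ or a drift term survives. My first check would be the weights: $f_3N$ and $SC$ should arise from the $R^p{}_{aka}$ (Ricci) term, where $\mathrm{Ric}=Hh-h^2$ weighted by $h$ produces $\tr h^3$-type factors. I would then verify the final constants against the flat-normal test case $S=1$, where $\nabla h=0$ and both sides vanish.
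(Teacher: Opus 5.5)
\textbf{There is a gap.} Your framework matches the paper's. You start from $\tfrac12D_hN=0$ and commute $h_{ij,kaa}$ into $h_{ij,aak}$; this is the paper's weighted commutator $W_{ijk}=\sum_a\lambda_a(h_{ij,kaa}-h_{ij,aak})$, which contracts to $SC+6B_3-3A_3$. You then move the weight inside, $\sum_a\lambda_ah_{ij,aak}=\nabla_k(D_hh)_{ij}-\sum_{a,b}h_{abk}h_{ijab}$.

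The missing step is a pointwise tensor formula for $D_hh=\sum_{a,b}h_{ab}h_{ij,ab}$. Without it the term $\sum_{ijk}h_{ijk}\nabla_k(D_hh)_{ij}$ cannot be evaluated, and that term carries most of the right-hand side. The needed identity is
\[
D_hh=-G+Sh^2-f_3h .
\]
It requires moving the $h$-contraction from the derivative slots of $\nabla^2h$ to the tensor slots. By Codazzi and Ricci, $h_{ijaa}-h_{aaij}=h_{aija}-h_{aiaj}=\lambda_a\lambda_i(\lambda_j-\lambda_a)(\delta_{ij}-\delta_{ia}\delta_{ja})$. Weighting by $\lambda_a$ gives $\sum_a\lambda_a(h_{ijaa}-h_{aaij})=(Sh^2-f_3h)_{ij}$. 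Only after this does your relation $G_{kl}+\sum_i\lambda_ih_{iikl}=0$ apply, because it contracts $h$ into the tensor slots.

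The tools you propose instead do not produce $D_hh$:
\begin{itemize}
\item The Simons equation controls only the unweighted trace $\Delta h$.
\item The divergence of $h\circ\nabla h$ trades $D_hh$ for another unknown field that cannot be evaluated pointwise.
\item Routing through Simons and \eqref{eq:gradH} would bring in $\nabla_{X^T}h$ and its derivatives. These are drift terms that have no place in this identity.
\end{itemize}
In the correct computation no drift term ever appears; the shrinker equation enters only through the constancy of $N=S(S-1)$.

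This gap also makes your predicted bookkeeping wrong. With $D_hh$ in hand, $\sum_{ijk}h_{ijk}\nabla_k(D_hh)_{ij}=-2\mathscr J+2SC-f_3N$. Here the $\nabla f_3$ term drops because $\sum_{ij}h_{ij}h_{ijk}=0$. The weight-moving correction contributes $-\mathscr J$.

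So the factor $3$ in $-3\mathscr J$ is $2+1$: two from $\nabla_kG_{ij}$ and one from the correction term. It does not come from three placements of the differentiated index. Likewise, $f_3N$ comes from the $-f_3h$ part of $D_hh$, not from the $R^p{}_{aka}$ term. That term weights to $\sum_a\lambda_aR^p{}_{aka}\leftrightarrow Sh-h^3$ and contributes only $SC-A_3$.

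Your proposed test at $S=1$, where $\nabla h=0$, makes both sides vanish trivially. It cannot detect any of these coefficient errors.
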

\begin{proof}
The Hessian of the constant function $S$ gives, using $\frac12S_{,ij}=\sum_{a,b}h_{abi}h_{abj}+\sum_a\lambda_a h_{aaij}$,
\[
\sum_a\lambda_a h_{aaij}=-G_{ij}.
\]
In a principal frame, Codazzi symmetry and Ricci commutation give
\[
\begin{aligned}
h_{ijaa}-h_{aaij}&=h_{aija}-h_{aiaj}\\
&=\lambda_a\lambda_i(\lambda_j-\lambda_a)
(\delta_{ij} - \delta_{ia}\delta_{ja}).
\end{aligned}
\]
The term with $a=i=j$ vanishes, and $\delta_{ij}$ allows $\lambda_j$ to be replaced by $\lambda_i$. Thus multiplication by $\lambda_a$ and summation yield
\[
\begin{aligned}
\sum_a\lambda_a(h_{ijaa}-h_{aaij})
&=\delta_{ij}\lambda_i\sum_a\lambda_a^2(\lambda_i-\lambda_a)\\
&=(S\lambda_i^2-f_3\lambda_i)\delta_{ij}
=(Sh^2-f_3h)_{ij}.
\end{aligned}
\]
Consequently
\begin{equation}\label{eq:Dhh}
D_hh=-G+Sh^2-f_3h.
\end{equation}
When the derivative of \eqref{eq:Dhh} is taken, the coefficient of $D_h$ must also be differentiated:
\begin{equation}\label{eq:commuteW}
(D_h\nabla h)_{ijk}
=\nabla_k(D_hh)_{ij}-\sum_{a,b}h_{abk}h_{ijab}+W_{ijk},
\quad W_{ijk}=\sum_a\lambda_a(h_{ij,kaa}-h_{ij,aak}).
\end{equation}
The weighted commutator $W$ is obtained by multiplying \eqref{eq:third-order-ricci} by $\lambda_a$ and summing over $a$. The curvature derivatives in that formula are computed from Gauss; in particular,
\[
\nabla_aR^p{}_{ika}
=h_{pka}h_{ia}+h_{pk}h_{iaa}-h_{paa}h_{ik}-h_{pa}h_{ika}.
\]
The two curvature-derivative terms contract to $2(B_3-A_3)$, the two terms with coefficient $2R$ to $4B_3$, and the final curvature term to $SC-A_3$. The first and last contributions can be seen from
\[
\begin{aligned}
\sum_{ijkap}\lambda_a h_{ijk}(\nabla_aR^p{}_{ika})h_{pj}
&=\sum_{ijk}(\lambda_i^2\lambda_j-\lambda_j^3)h_{ijk}^2=B_3-A_3,\\
\sum_{ijkap}\lambda_a h_{ijk}R^p{}_{aka}h_{ijp}
&=\sum_{ijk}(S\lambda_k-\lambda_k^3)h_{ijk}^2=SC-A_3.
\end{aligned}
\]
For the middle type, the relation $R^p{}_{ika}=h_{pk}h_{ia}-h_{pa}h_{ik}$ and $\sum_a\lambda_a h_{aaj}=0$ give directly
\[
2\sum_{i,j,k,a,p}\lambda_a h_{ijk}R^p{}_{ika}h_{pja}
=2\sum_{i,j,k}\lambda_i^2\lambda_k h_{ijk}^2=2B_3;
\]
the partner with $i,j$ interchanged is equal by Codazzi symmetry. Adding the five contributions gives
\begin{equation}\label{eq:Wcontract}
\sum_{ijk}h_{ijk}W_{ijk}=SC+6B_3-3A_3.
\end{equation}

The two terms containing second derivatives of $h$ in the differentiated identity satisfy
\[
\begin{aligned}
\sum_{ijk}h_{ijk}\nabla_kG_{ij}
&=\sum_{ijkab}h_{ijk}(h_{abik}h_{abj}+h_{abi}h_{abjk})=2\mathscr J,\\
\sum_{ijkab}h_{ijk}h_{abk}h_{ijab}&=\mathscr J,
\end{aligned}
\]
by symmetry of $h_{ijk}$ and $h_{ijkl}$ in their first three indices, followed by relabeling. They contribute $-3\mathscr J$. The derivative of $Sh^2$ contributes $2SC$, since $S$ is constant and $(\nabla_kh^2)_{ij}=(\lambda_i+\lambda_j)h_{ijk}$ in a principal frame. Finally, $\sum_{ij}h_{ij}h_{ijk}=0$ eliminates the $\nabla f_3$ term, leaving $-f_3N$ from the derivative of $-f_3h$. The product rule and \eqref{eq:Wcontract} therefore give
\[
\frac12D_hN=\sum_{ijkl}\lambda_lh_{ijkl}^2-3\mathscr J+3SC-f_3N+6B_3-3A_3.
\]
By \eqref{eq:constantS}, $N=S(S-1)$ is constant and the left side vanishes. Rearrangement proves \eqref{eq:fourth}.
\end{proof}

\begin{proposition}\label{prop:boundedposition}
If $S>1$ is constant on a complete three-dimensional self-shrinker, then $\sup_M|X|<\infty$.
\end{proposition}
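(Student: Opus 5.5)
The plan is to control the tangential and normal parts of $X$ separately, using $|X|^2=|X^T|^2+H^2$. The normal part is immediate: $H^2\le 3S$, so $H$ is bounded. Everything therefore reduces to bounding $|X^T|$. The tools are the first two identities in \eqref{eq:gradH}, together with the uniform bounds on $h$, $\nabla h$ and $\nabla^2h$ that hold when $S$ is constant. By \eqref{eq:constantS} we have $|\nabla h|^2=N=S(S-1)$. By \eqref{eq:pengterng}, $|\nabla^2h|^2=N(S-2)+3(A-2B)$, and the right-hand side is bounded by a constant depending only on $S$, since $|\lambda_i|\le\sqrt S$.

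\emph{Step 1 (first-order smallness along $X^T$).} Suppose, for contradiction, that $|X|(p_j)\to\infty$ along a sequence. Then $|X^T|(p_j)\to\infty$. Write $X^T=|X^T|e$ with $e$ a unit vector. From $\nabla H=h(X^T,\cdot)$ and $|\nabla H|\le\sqrt3|\nabla h|$ we get $|h(e)|\le C/|X^T|$. From $\nabla^2H=\nabla_{X^T}h+h-Hh^2$, with $|\nabla^2H|\le\sqrt3|\nabla^2h|$ bounded, we get $|\nabla_eh|\le C/|X^T|$. Thus, along $p_j$, the direction $e$ is asymptotically a null direction of both $h$ and $\nabla h$. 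In particular $G(e,e)\to0$ and $\lambda_e\to0$.

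\emph{Step 2 (second-order smallness).} Next I differentiate $\nabla_{X^T}h=\nabla^2H-h+Hh^2$ once more, using $\nabla_YX^T=Y-Hh(Y)$. This expresses $|X^T|\,\nabla^2h(\cdot,\cdot,\cdot,e)$ through $\nabla^3H$ and bounded terms. This step needs a bound on $\nabla^3h$ or $\nabla^3H$. I would first try to obtain it by differentiating the identity behind \eqref{eq:pengterng} once more, in the style of Lemma~\ref{lem:fourth}. Failing that, I would use the drift equation $\calL h=(1-S)h$ with Shi-type local estimates on balls where the geometry is uniformly bounded, which holds since $\sec_M\ge-S/2$ and $|h|$ is bounded. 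Once $\nabla^2h(\cdot,e)=O(1/|X^T|)$, the Ricci commutation formula gives the same for $\nabla_e\nabla h$. The commutator terms involve $R(e,\cdot)$, which is built from $h(e)\to0$.

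\emph{Step 3 (algebraic limit).} Pass to a subsequence so that the principal data at $p_j$ converge: $\lambda_i$, $h_{ijk}$, $h_{ijkl}$. In the limit we have $\lambda_1=0$ with $e_1=e$, every component of $h_{ijk}$ with an index equal to $1$ vanishes, and the corresponding second-derivative components vanish. The identities $N=S(S-1)$, $\sum_i\lambda_ih_{iik}=0$, \eqref{eq:pengterng} and \eqref{eq:fourth} pass to the limit. They become identities for a symmetric $3$-tensor on the $2$-plane $e^\perp$ with principal curvatures $\lambda_2,\lambda_3$ satisfying $\lambda_2^2+\lambda_3^2=S$. Now $h_{ijk}$ restricted to a $2$-plane has only four components. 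The trace condition $\lambda_2h_{22k}+\lambda_3h_{33k}=0$ for $k=2,3$ removes two more. The relation $|\nabla^2h|^2=N(S-2)+3(A-2B)$ and the commutation identities then become an explicit finite system. I would aim to show this system forces $N=0$, contradicting $N=S(S-1)>0$. Morally, the limit is the $2$-dimensional case with constant $S$. There I would mimic, at the algebraic level, the Ding--Xin/Cheng--Ogata argument that $S\in\{0,1\}$.

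The main obstacle is Step 3, together with the higher-derivative bound it needs in Step 2. The first-order information alone, $h(e)=0$ and $\nabla_eh=0$, does not contradict $N>0$, because $\nabla h$ can live entirely on $e^\perp$. One must therefore extract enough second-order (Ricci-commutation and drift) information along $e$ to make the $2$-dimensional reduction rigid. If the pointwise algebra proves too weak, the fallback is a genuine blow-up argument. Translate along $p_j$ and use the uniform curvature bounds to obtain a smooth limit immersion, which splits off the line in direction $e$ and carries constant $S$ on a $2$-dimensional factor solving a limiting equation. Then apply the known two-dimensional classification to conclude $S\le1$, a contradiction.
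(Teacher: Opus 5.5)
\textbf{There is a genuine gap in Step~3.} Your Step~1 is correct, and it is exactly the first-order input the paper uses. Step~3, however, is stated as a goal rather than proved. As you observe yourself, the limit data along an arbitrary sequence with $|X^T|\to\infty$ do not contradict $N>0$. Take $h=\operatorname{diag}(0,\lambda_2,\lambda_3)$ and a cubic on $e^\perp$ constrained only by $\lambda_2h_{22k}+\lambda_3h_{33k}=0$. The choice $\lambda_3=0$, $h_{233}^2=S(S-1)/3$, with all other components zero, gives $N=S(S-1)$ and makes the right side of \eqref{eq:pengterng} equal to $6(S-1)h_{233}^2>0$.

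So everything rests on second-order information, and on the third-derivative bounds of Step~2, which you have not extracted. The two-dimensional classification you want to imitate is not a pointwise computation at one point. The blow-up fallback is not justified either. The translated limit does split off the line $\R e$. But with $X=Y+X(p_j)$ and $e_j=X(p_j)/|X(p_j)|$, one has $H=-\langle Y,\nu\rangle-|X(p_j)|\langle e_j,\nu\rangle$. The last term is bounded but of the form $\infty\cdot 0$, and nothing forces its limit to be $\langle w,\nu\rangle$ for a fixed vector $w$. Hence the two-dimensional cross-section is not shown to be a self-shrinker, and the two-dimensional classification cannot be invoked.

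\textbf{The missing idea.} You do not need a contradiction along every far-out sequence, only along those on which also $\nabla H\to0$.
\begin{itemize}
\item On such a sequence the ordinary trace $\sum_ih_{iik}=H_k$ also vanishes in the limit. Together with the weighted trace condition, it kills the planar cubic when $\lambda_2\ne\lambda_3$.
\item When $\lambda_2=\lambda_3$, one has $\lambda_2^2=S/2$ and $A=B=SN/2$. Then \eqref{eq:pengterng} gives $\lim|\nabla^2h|^2=-N(S/2+2)<0$.
\end{itemize}
This yields $R,\eta>0$ with $|X^T|\ge R\Rightarrow|\nabla H|\ge\eta$.

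The bound $|H|\le\sqrt{3S}$ is then used dynamically, not just to reduce to $X^T$. Follow the unit field $-\nabla H/|\nabla H|$ from a point with $|X|>R+\sqrt{3S}+L$, where $L>2\sqrt{3S}/\eta$. Since $X$ moves at unit speed and $|X^T|\ge|X|-|H|$, the condition $|X^T|>R$ persists up to time $L$. Completeness gives existence of the trajectory that long. Meanwhile $H$ decreases at rate at least $\eta$, which forces $H<-\sqrt{3S}$, a contradiction. No bounds on $\nabla^3h$ and no two-dimensional classification are needed.
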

\begin{proof}
By \eqref{eq:constantS}, $N=S(S-1)>0$. In a principal frame,
\[
A-2B=\sum_{ijk}(\lambda_i^2-2\lambda_i\lambda_j)h_{ijk}^2,
\]
and $|\lambda_i|\le\sqrt S$ gives $|A-2B|\le3SN$. The second derivative identity \eqref{eq:pengterng} therefore implies $|\nabla^2h|^2\le N(10S-2)$. Since $\Delta h-\nabla_{X^T}h=(1-S)h$ and $|\Delta h|\le\sqrt3|\nabla^2h|$, we also have
$|\nabla_{X^T}h|\le\sqrt3|\nabla^2h|+(S-1)\sqrt S$.
We claim that there exist $R,\eta>0$ such that
\begin{equation}\label{eq:gradient-away}
|X^T|\ge R\quad\Longrightarrow\quad|\nabla H|\ge\eta.
\end{equation}
If not, choose $p_j$ with $r_j=|X^T(p_j)|\to\infty$ and $|\nabla H(p_j)|\to0$. At $p_j$ set $e_3=X^T/r_j$ and diagonalize $h|_{e_3^\perp}$. By \eqref{eq:gradH}, $h(e_3,\cdot)=r_j^{-1}\nabla H\to0$, while $\nabla_{e_3}h=r_j^{-1}\nabla_{X^T}h\to0$. After passing to a subsequence, both $h$ and $\nabla h$ converge, with $h\to\operatorname{diag}(\lambda,\mu,0)$. By the symmetry of $\nabla h$, the limiting cubic has no component with index $3$; its squared norm remains $N>0$. The identities $\nabla H\to0$ and $\nabla S=0$ give, for $k=1,2$,
\[
h_{11k}+h_{22k}=0,\qquad\lambda h_{11k}+\mu h_{22k}=0.
\]
If $\lambda\ne\mu$, the $2\times2$ coefficient matrix has determinant $\mu-\lambda\ne0$. Thus $h_{11k}=h_{22k}=0$ for $k=1,2$; by full cubic symmetry these are all the surviving components, contradicting $N>0$. If $\lambda=\mu$, then $\lambda^2=S/2$, and every surviving cubic component lies in the common $\lambda$-eigenspace. Hence $A=B=\lambda^2N=SN/2$, so \eqref{eq:pengterng} becomes
\[
\lim_j|\nabla^2h|^2=N(S-2)+3\left(\frac{SN}{2}-SN\right)
=-N\left(\frac S2+2\right)<0,
\]
another contradiction. Thus \eqref{eq:gradient-away} holds.

Put $H_0=\sqrt{3S}$ and choose $L>2H_0/\eta$. If $|X|$ were unbounded, choose $p$ with $|X(p)|>R+H_0+L$. Follow the unit-speed field $-\nabla H/|\nabla H|$ from $p$. If $\gamma$ is its trajectory, then $|X(\gamma(t))-X(p)|\le t$ because $X$ is isometric on tangent vectors. Since $X=X^T-H\nu$, we have $|X^T|\ge|X|-|H|$. Hence, for $t\le L$ along the existence interval,
\[
|X^T(\gamma(t))|\ge |X(\gamma(t))|-|H(\gamma(t))|
\ge |X(p)|-t-H_0>R.
\]
The field stays smooth there by \eqref{eq:gradient-away}. If the trajectory ceased to exist at some finite time $T<L$, its unit speed would make $\gamma(t)$ Cauchy as $t\uparrow T$; completeness gives a limit point where the same strict inequality permits continuation. Therefore it exists to time $L$. Yet along it $dH/dt=-|\nabla H|\le-\eta$, so $H(\gamma(L))\le H(p)-\eta L<-H_0$, contrary to $|H|\le H_0$.
\end{proof}

\begin{lemma}\label{lem:limit}
If $S>1$ is constant on a complete three-dimensional self-shrinker, there is a common limiting triple $(h,\nabla h,\nabla^2h)$ satisfying \eqref{eq:constantS}, \eqref{eq:pengterng}, and \eqref{eq:fourth}, with
\begin{equation}\label{eq:limit-properties}
\begin{gathered}
h>0,\qquad Hh\ge I,\qquad \sigma_2(h)=\frac{H^2-S}{2}\ge1,\\
\sum_i h_{iik}=\sum_i\lambda_i h_{iik}=0,
\qquad |\nabla h|^2=N=S(S-1)>0.
\end{gathered}
\end{equation}
\end{lemma}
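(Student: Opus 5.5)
We apply the Hessian maximum principle (Lemma~\ref{lem:omori}) to $\rho=|X|^2/2$, which is bounded by Proposition~\ref{prop:boundedposition}. Since $S$ is constant, the Gauss equation gives $\sec_M\ge -S/2$, so the lemma applies to the bounded-above function $\rho$. It yields points $p_j$ with $\rho(p_j)\to\sup\rho$, $|X^T|(p_j)=|\nabla\rho|(p_j)\le\varepsilon_j$ and $\nabla^2\rho(p_j)\le\varepsilon_j g$. By \eqref{eq:rho} the Hessian condition reads $g-Hh\le\varepsilon_j g$ at $p_j$, that is, $Hh\ge(1-\varepsilon_j)I$.

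Next we extract the limiting triple. At each $p_j$ we choose an orthonormal principal frame and record the components $h_{ij}$, $h_{ijk}$, $h_{ijkl}$. These are uniformly bounded: $|h|^2=S$, $|\nabla h|^2=N$ is constant by \eqref{eq:constantS}, and $|\nabla^2h|^2$ is bounded by \eqref{eq:pengterng} together with $|A-2B|\le 3SN$, exactly as in the proof of Proposition~\ref{prop:boundedposition}. After passing to a subsequence, the component arrays converge to a triple $(h,\nabla h,\nabla^2h)$ on $\R^3$ with $h$ diagonal. The identities \eqref{eq:constantS}, \eqref{eq:pengterng} and \eqref{eq:fourth} are pointwise polynomial identities in these components, valid at every $p_j$, so they pass to the limit. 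The same holds for the symmetries: $h_{ijk}$ is completely symmetric, and $h_{ijkl}$ is symmetric in its first three indices. The limit also satisfies $Hh\ge I$, $\sum_i\lambda_ih_{iik}=0$ and $N=S(S-1)>0$.

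It remains to derive the new properties. The trace condition $\sum_ih_{iik}=H_k=0$ comes from \eqref{eq:gradH}: we have $|\nabla H|=|h(X^T,\cdot)|\le\sqrt S\,|X^T|\le\sqrt S\varepsilon_j\to0$. Because $S>1$, the matrix $h$ is nonzero, so $Hh\ge I$ forces $H\ne0$. Replacing $\nu$ by $-\nu$ if necessary (which flips the sign of the whole triple and preserves every identity, each being homogeneous of the appropriate parity), we may assume $H>0$. Then $\lambda_i\ge 1/H>0$, so $h>0$.

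The step requiring the most care is $\sigma_2(h)\ge1$. We would use $H\lambda_i\ge1$ for each $i$ and write
\[
H^2-S=\sum_{i\ne j}\lambda_i\lambda_j=\sum_i\lambda_i(H-\lambda_i).
\]
By the Cauchy--Schwarz inequality, $H^2\le 3S$, and one would like to compare $\sum_i\lambda_i(H-\lambda_i)$ with $2$ using $\lambda_i\ge 1/H$. The cleanest route is to observe that $\lambda_i\lambda_j\ge \lambda_j/H$ for every pair, since $\lambda_i\ge 1/H$. Summing over ordered pairs $i\ne j$ gives
\[
2\sigma_2=\sum_{i\ne j}\lambda_i\lambda_j\ge\sum_{i\ne j}\frac{\lambda_j}{H}=\frac{(n-1)H}{H}=2
\]
for $n=3$. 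Hence $\sigma_2(h)\ge1$. This uses only positivity and $Hh\ge I$, so the remaining subtlety is the uniform boundedness of $\nabla^2h$ needed for compactness, which the earlier estimate supplies.
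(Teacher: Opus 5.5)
Your proof is correct and follows the paper's argument essentially step by step. You apply the Hessian maximum principle to $\rho$ at its supremum, obtain compactness of the principal-frame arrays from \eqref{eq:constantS} and \eqref{eq:pengterng}, use $\nabla H=h(X^T,\cdot)\to0$ for the ordinary trace, and reverse the normal to get $H>0$ and $h>0$. The only difference is cosmetic, in the final inequality: you sum $\lambda_i\lambda_j\ge\lambda_j/H$ over ordered pairs, whereas the paper uses $\sigma_2-Ha=bc-a^2\ge0$ with $a$ the smallest eigenvalue, and both are valid one-line arguments.
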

\begin{proof}
Gauss gives a uniform lower sectional-curvature bound $-S/2$. By Proposition \ref{prop:boundedposition}, $\rho=|X|^2/2$ is bounded above. Apply Lemma \ref{lem:omori} to $-\rho$. In view of \eqref{eq:rho}, this supplies points $p_j$ for which $|X^T(p_j)|\to0$ and
\[
H(p_j)h(p_j)\ge(1-\varepsilon_j)I.
\]
The arrays $h,\nabla h,\nabla^2h$ are uniformly bounded by \eqref{eq:constantS} and \eqref{eq:pengterng}; choosing principal frames and taking one subsequence gives a common algebraic limit. In that limit $Hh\ge I$; taking its trace gives $H^2\ge3$. After reversing the normal if necessary, $H>0$, and each eigenvalue satisfies $H\lambda_i\ge1$, so $h>0$. Let $0<a\le b\le c$ be its limiting principal curvatures. The smallest eigenvalue of $Hh\ge I$ gives $Ha\ge1$, while
\[
\sigma_2(h)=ab+ac+bc,\qquad
\sigma_2(h)-Ha=bc-a^2\ge0.
\]
Consequently $\sigma_2(h)\ge1$.

Finally, the ordinary trace of the cubic is $\sum_i h_{iik}=H_k$. Since $\nabla H=h(X^T,\cdot)$ and $X^T(p_j)\to0$, it vanishes in the limit. The weighted trace $\sum_i\lambda_i h_{iik}=S_k/2$ already vanishes at every point. The identities \eqref{eq:constantS}, \eqref{eq:pengterng} and \eqref{eq:fourth} involve only the bounded arrays $h,\nabla h,\nabla^2h$ and consequently pass to this same subsequence.
\end{proof}

\section{The fourth-order comparison}

At the common limit of Lemma \ref{lem:limit}, the shape operator is positive definite with $\sigma_2\ge1$, and the symmetric cubic tensor $h_{ijk}$ satisfies
\[
\sum_i h_{iik}=0,\qquad\sum_i\lambda_i h_{iik}=0.
\]
Ricci commutation determines the nonsymmetric part of $\nabla^2h$. To compare the ordinary and weighted fourth-order identities, we use its complete symmetrization:
\begin{equation}\label{eq:U-definition}
U_{ijkl}=\frac14(h_{ijkl}+h_{jkli}+h_{klij}+h_{lijk}),
\qquad \calC=\nabla^2h-U.
\end{equation}
For symmetric two-tensors $A_0,B_0$, define the six-term symmetric product by
\begin{equation}\label{eq:sym-product}
\begin{aligned}
6(A_0\odot B_0)_{ijkl}={}&(A_0)_{ij}(B_0)_{kl}+(A_0)_{ik}(B_0)_{jl}+(A_0)_{il}(B_0)_{jk}\\
&+(A_0)_{jk}(B_0)_{il}+(A_0)_{jl}(B_0)_{ik}+(A_0)_{kl}(B_0)_{ij}.
\end{aligned}
\end{equation}
For a covariant fourth-order tensor $V$, define the ordinary and $h$-weighted contractions, both of which are two-tensors, and the weighted quadratic form by
\begin{equation}\label{eq:traces-and-weight}
(\tr_I V)_{kl}=\sum_iV_{iikl},\qquad
(L_hV)_{kl}=\sum_{i,j}h_{ij}V_{ijkl},\qquad
|V|_h^2=\sum_{ijkl}\lambda_lV_{ijkl}^2.
\end{equation}
In $L_h(\nabla^2h)$, the contraction is over the first two tensor slots, whereas in $D_hh$ it is over the last two derivative slots. When $h>0$, the corresponding inner product is $\langle V,W\rangle_h=\sum_{i,j,k,l}\lambda_lV_{ijkl}W_{ijkl}$.

Define
\begin{equation}\label{eq:P-R}
\begin{aligned}
P_{ijkl}&=\frac13\sum_m(h_{ijm}h_{klm}+h_{ikm}h_{jlm}+h_{ilm}h_{jkm}),\\
\calR&=\frac32(h^2\odot h^2-h^3\odot h).
\end{aligned}
\end{equation}
The tensor $P$ is the complete symmetrization of the quadratic factor in $\mathscr J$. The tensor $\calR$ accounts for the weighted interaction between $U$ and the Ricci commutation correction $\calC$.

\begin{lemma}\label{lem:algebra}
Under the two trace conditions above,
\begin{equation}\label{eq:three-identities}
|P|^2=\frac{N^2-|G|^2}{3},\qquad
\tr_I P=\frac23G,\quad L_hP=\frac23F,\qquad
\langle P,\calR\rangle=\Lambda_h(G),
\end{equation}
where
\begin{equation}\label{eq:F-Lambda}
\begin{aligned}
F={}&HG-hG-Gh+\left(C-\frac{NH}{2}\right)I+\frac N2h,\\
\Lambda_h(G)={}&\frac{7S-5H^2}{8}A
+\frac{4H^3-7HS+f_3}{8}C\\
&+\frac{-4H^4+12H^2S-3S^2-5Hf_3}{24}N.
\end{aligned}
\end{equation}
Furthermore, with $d=\det h$,
\begin{equation}\label{eq:CH-contract}
B=-2A+2HC-\sigma_2N,\quad
A_3=HA-\sigma_2C+dN,\quad
B_3=\frac{HB-dN}{2}.
\end{equation}
All these identities remain valid at repeated eigenvalues.
\end{lemma}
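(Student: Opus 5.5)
All of Lemma~\ref{lem:algebra} is pointwise linear algebra for a symmetric cubic $T_{ijk}=h_{ijk}$ on $\R^3$ satisfying $\sum_iT_{iik}=0$ and $\sum_i\lambda_iT_{iik}=0$. The plan is to prove the identities in the order in which they feed each other. We work throughout in a principal frame. At repeated eigenvalues every statement is either frame-free or depends polynomially on $h$ and $T$, so continuity covers the degenerate cases.

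\textbf{Step 1: the Cayley--Hamilton contractions \eqref{eq:CH-contract}.} These use only that $n=3$. The identity $h^3=Hh^2-\sigma_2h+dI$ gives $\lambda_i^3=H\lambda_i^2-\sigma_2\lambda_i+d$. Multiplying by $T_{ijk}^2$ and summing yields $A_3=HA-\sigma_2C+dN$.

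For $B$, expand $0=\sum_{ijk}(\lambda_i-\lambda_j)\cdots$; more directly, use $\lambda_i\lambda_j=\sigma_2-\lambda_k(H-\lambda_k)$ whenever $i,j,k$ are distinct. Terms with repeated indices are handled with the trace conditions. A cleaner route is to expand $\sum_{ijk}(\lambda_i+\lambda_j+\lambda_k-H)\lambda_i\lambda_jT_{ijk}^2$, or to contract the polynomial identity $\prod_m(\lambda_m-x)$ symmetrically. I would just verify it componentwise: write $T$ in terms of its ten components, impose the six trace relations, and compare both sides. In the same way $B_3=\sum\lambda_i^2\lambda_jT^2$ reduces, via $\lambda_i\lambda_j\lambda_k=d$ for distinct indices, to $(HB-dN)/2$.

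\textbf{Step 2: $|P|^2$, $\tr_IP$ and $L_hP$.} By definition, $9|P|^2$ consists of three squared terms, each equal to $\sum_{kl}|G|^2$-type contractions $\sum(T_{ij}\cdot T_{kl})^2$, plus cross terms of the form $\sum T_{ijm}T_{klm}T_{ikn}T_{jln}$. In dimension three the trace-free conditions should force the cross terms to equal $\frac12(N^2-|G|^2)$-type expressions. I would prove this using the antisymmetrization identity $T_{[i}{}_{\cdot}\wedge\cdots=0$, which holds because a $4$-form vanishes on $\R^3$: the alternation over four indices of $T_{i\,a\,m}T_{j\,b\,m}\delta\cdots$ is zero, giving the needed quadratic relation.

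The traces are direct. We have $\sum_iP_{iikl}=\frac13(\sum_m H_mT_{klm}+2G_{kl})=\frac23G$. For $L_hP$ we get $\frac13\sum_i\lambda_i(\ldots)$. The first term vanishes by the weighted trace, and the other two give $\frac23\sum_{im}\lambda_iT_{ikm}T_{ilm}=\frac23(hG)$-like terms. These are not symmetric in the naive form, so Cayley--Hamilton and the same dimension-three vanishing identity are used to rewrite $\sum_{im}\lambda_iT_{ikm}T_{ilm}$ as $F/2$.

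\textbf{Step 3: $\langle P,\calR\rangle=\Lambda_h(G)$.} Expanding $\odot$ against the fully symmetric $P$ gives $\langle P,h^2\odot h^2\rangle=\sum P_{ijkl}h^2_{ij}h^2_{kl}$, and similarly for $h^3\odot h$. Substituting $P$, each becomes a sum of terms like $\sum\lambda_i^2\lambda_k^2T_{iim}T_{kkm}$ and $\sum\lambda_i^2\lambda_j^2T_{ijm}^2$. The first kind is rewritten using the two trace conditions: the vectors $(T_{iim})_i$ are orthogonal to $(1,1,1)$ and $(\lambda_i)$, hence parallel to the cross product. The second kind is reduced to $A,C,N$ with $h^3,h^4$ eliminated via Cayley--Hamilton and the Step 1 relations. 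Collecting coefficients produces $\Lambda_h(G)$.

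\textbf{Main obstacle.} I expect the hardest part to be the quartic identity for $|P|^2$ and the bookkeeping in Step 3. For these I would rely on the parametrization of the admissible cubics, a $4$-dimensional space, and, if needed, a componentwise symbolic check.
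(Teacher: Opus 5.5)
\textbf{Assessment.} Your framework is sound: principal frame, the two trace conditions, a dimension-three identity, and Cayley--Hamilton. The vanishing of four-fold alternations on $\R^3$ is the same fact the paper uses when it writes $K_{ijkl}=\sum_m(h_{ikm}h_{jlm}-h_{ilm}h_{jkm})$ in terms of its Ricci contraction $-G$. However, two values you anticipate are wrong.

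With $T_m=(h_{ijm})_{i,j}$, which is trace-free, the alternation evaluated on $(T_m,T_m,T_n,T_n)$ gives
\[
\tr(T_m^2)\tr(T_n^2)+2\tr(T_mT_n)^2-4\tr(T_m^2T_n^2)-2\tr(T_mT_nT_mT_n)=0 .
\]
Since $\sum_{m,n}\tr(T_m^2T_n^2)=|G|^2$, each cross term equals $J_0=\tfrac12N^2-|G|^2$. A value of the form $\tfrac12(N^2-|G|^2)$ would give $|P|^2=N^2/3$. With one free index pair, the same alternation is the Cayley--Hamilton identity polarized at $(T_m,T_m,h)$. Summing it over $m$ with $\tr T_m=\tr(hT_m)=0$ gives $\sum_{i,m}\lambda_ih_{ikm}h_{ilm}=F_{kl}$ exactly. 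This matrix is obviously symmetric in $k,l$, and your $F/2$ would yield $L_hP=F/3$.

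\textbf{The gap.} It lies in \eqref{eq:CH-contract} and in the second kind of terms in Step~3. Reducing single eigenvalues via $\lambda_i^3=H\lambda_i^2-\sigma_2\lambda_i+d$ gives $A_3$ and $\sum\lambda_i^3\lambda_jh_{ijk}^2$. It does not give $B$, $B_3$, or $E_{22}=\sum_{i,j,k}\lambda_i^2\lambda_j^2h_{ijk}^2$. After off-diagonal coefficients are reduced by symmetric-function identities, what remains are diagonal sums $\sum_{i,k}q(\lambda_i)h_{iik}^2$, and these need a relation you never state.

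The paper obtains it by linearizing Cayley--Hamilton in the direction $h_k$, which gives \eqref{eq:linearCH}. Its off-diagonal coefficients vanish identically, so its content is $p'(\lambda_i)h_{iik}=\langle h^2,h_k\rangle$, where $p$ is the characteristic polynomial. Contracting with $h_{ijk}$, $\lambda_ih_{ijk}$ and $\lambda_i^2h_{ijk}$ gives
\[
2A+B-2HC+\sigma_2N=0,\qquad A_3+2B_3=H(A+B)-\sigma_2C,\qquad E_{22}=D_*+\sigma_2B-2dC.
\]
The last identity shows that the second kind also produces $D_*$, so it is not determined by your Step~1 relations. One gets $\langle P,\calR\rangle=\tfrac32D_*+2\sigma_2B-3dC-HB_3$, and the proof is finished only with \eqref{eq:Dstar}. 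The first kind is simply $\tfrac12D_*$ by the weighted trace; the parallelism to the cross product is what you need to prove \eqref{eq:Dstar}. A symbolic check on the four-dimensional family would confirm everything, but it is not the reduction you describe.

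\textbf{Repeated eigenvalues.} Your continuity argument needs a density statement, because at scalar $h$ the admissible cubics form a $7$-dimensional space. The statement holds: perturb to $\lambda I+\epsilon K$ with $\tr(KT_k)=0$ and $K$ of simple spectrum. The paper's curvature and linearized Cayley--Hamilton arguments are frame-free and avoid this issue; only \eqref{eq:Dstar} is checked separately for scalar $h$.
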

\begin{proof}
To compute $|P|^2$, compare the three index pairings in its definition. Their difference
$K_{ijkl}=\sum_m(h_{ikm}h_{jlm}-h_{ilm}h_{jkm})$ has curvature symmetries and Ricci contraction $-G$, since $\sum_i h_{iim}=0$. In dimension three the Ricci tensor determines the full curvature tensor, and hence
\begin{equation}\label{eq:algebra-curvature}
K_{ijkl}=-G_{ik}\delta_{jl}-G_{jl}\delta_{ik}
+G_{il}\delta_{jk}+G_{jk}\delta_{il}
+\frac N2(\delta_{ik}\delta_{jl}-\delta_{il}\delta_{jk}).
\end{equation}
The three-dimensional curvature decomposition gives $|K|^2=4|G|^2-N^2$. Each of the three pairings in $P$ has squared norm $|G|^2$; for example,
\[
\sum_{i,j,k,l}\left(\sum_m h_{ijm}h_{klm}\right)^2
=\sum_{m,p}\left(\sum_{i,j}h_{ijm}h_{ijp}\right)^2=|G|^2.
\]
The mixed inner product of any two distinct pairings has the same value $J_0$, by index symmetry. Since $K$ is the difference between two pairings,
\[
9|P|^2=3|G|^2+6J_0,\qquad |K|^2=2|G|^2-2J_0.
\]
Eliminating $J_0$ gives $3|P|^2=3|G|^2-|K|^2=N^2-|G|^2$.
For the ordinary trace, two of the three terms defining $P$ give $G_{kl}$ and the remaining term contains $\sum_ih_{iim}=0$; hence $\tr_IP=2G/3$. Similarly,
\[
(L_hP)_{kl}=\frac23\sum_{i,m}\lambda_i h_{ikm}h_{ilm},
\]
where the remaining term vanishes by $\sum_i\lambda_i h_{iim}=0$. Contracting \eqref{eq:algebra-curvature} with $\lambda_i$ in the $(i,k,i,l)$ positions gives
\[
\sum_{i,m}\lambda_i h_{ikm}h_{ilm}
=(H-\lambda_k-\lambda_l)G_{kl}
+\left(C-\frac{NH}{2}\right)\delta_{kl}+\frac N2h_{kl}=F_{kl}.
\]

To compute the remaining contractions, we linearize the three-dimensional Cayley--Hamilton identity $h^3-Hh^2+\sigma_2h-dI=0$ in the direction $h_k=\nabla_{e_k}h$, with $k$ fixed. The trace conditions give $\tr h_k=\tr(hh_k)=0$, so the variations of $H$ and $\sigma_2$ vanish, while
\[
\delta(\det h)=\tr\big[(h^2-Hh+\sigma_2I)h_k\big]
=\langle h^2,h_k\rangle.
\]
The linearized identity is
\begin{equation}\label{eq:linearCH}
h^2h_k+hh_kh+h_kh^2-H(hh_k+h_kh)+\sigma_2h_k-\langle h^2,h_k\rangle I=0.
\end{equation}
At a principal frame, the $(i,j)$ component is
\[
\big[\lambda_i^2+\lambda_i\lambda_j+\lambda_j^2
-H(\lambda_i+\lambda_j)+\sigma_2\big]h_{ijk}
-\delta_{ij}\langle h^2,h_k\rangle=0.
\]
Multiplication by $h_{ijk}$ and summation gives $2A+B-2HC+\sigma_2N=0$: the last term vanishes because $\sum_i h_{iik}=0$. Multiplication by $\lambda_i h_{ijk}$ instead gives $A_3+2B_3-H(A+B)+\sigma_2C=0$. Using $h^3=Hh^2-\sigma_2h+dI$ proves \eqref{eq:CH-contract}. The contraction $\langle P,\calR\rangle$ also involves $D_*=\sum_k\langle h^2,h_k\rangle^2$. We first express this quantity in terms of $A,C,N$:
\begin{equation}\label{eq:Dstar}
4D_*=-(3S-H^2)A+(3f_3-HS)C+(S^2-Hf_3)N.
\end{equation}
For nonscalar $h$, the two trace conditions say that the diagonal of each $h_k$ is orthogonal to both $(1,1,1)$ and $(\lambda_1,\lambda_2,\lambda_3)$. It is therefore a multiple of $w=(\lambda_2-\lambda_3,\lambda_3-\lambda_1,\lambda_1-\lambda_2)$. Write $h_{iik}=w_i\alpha_k$ and $p=w_1w_2w_3$. The only other independent cubic component is $h_{123}$, and symmetry gives
\[
G_{ii}=|w|^2\alpha_i^2+2w_i^2\sum_{k\ne i}\alpha_k^2+2h_{123}^2.
\]
The relations $\sum_iw_i=0$, $\sum_iw_i^3=3p$ and $|w|^2w_i-2w_i^3=-2p$ give, upon multiplication of the preceding expression for $G_{ii}$ by $w_i$ and summation,
$\sum_iw_iG_{ii}=4p\sum_k\alpha_k^2$. Also $\sum_i\lambda_i^2w_i=-p$, so $\langle h^2,h_k\rangle=-p\alpha_k$. Hence
\[
D_*=p^2\sum_k\alpha_k^2=\frac p4\sum_iw_iG_{ii}.
\]
Expanding the quadratic polynomial in $\lambda_i$ yields
\[
pw_i=-(3S-H^2)\lambda_i^2+(3f_3-HS)\lambda_i+S^2-Hf_3,
\]
and \eqref{eq:Dstar} follows on using $\sum_i\lambda_i^2G_{ii}=A$, $\sum_i\lambda_iG_{ii}=C$ and $\tr G=N$. If $h$ is scalar, both sides vanish directly.

For the remaining contraction $\langle P,\calR\rangle$, only the pair components contribute. In a principal frame they are
\[
3P_{iijj}=\sum_m(h_{iim}h_{jjm}+2h_{ijm}^2),
\qquad \calR_{iijj}=-\frac14\lambda_i\lambda_j(\lambda_i-\lambda_j)^2.
\]
Since each pair $(i,i,j,j)$ has six equal components in the fully symmetric tensors, their contraction begins with
\[
\langle P,\calR\rangle
=-\frac32\sum_{i<j}\lambda_i\lambda_j(\lambda_i-\lambda_j)^2P_{iijj}.
\]
To evaluate this contraction, fix $m$ and put $a_i=\lambda_i h_{iim}$. The weighted trace condition gives $\sum_i a_i=0$, and hence
\[
\sum_{i<j}(\lambda_i-\lambda_j)^2a_ia_j
=-\left(\sum_i\lambda_i a_i\right)^2.
\]
After summing over $m$, these terms in $3P_{iijj}$ contribute $D_*/2$. For the remaining terms, the symmetry $h_{ijm}=h_{jim}$ gives
\[
-\sum_{i<j,m}\lambda_i\lambda_j(\lambda_i-\lambda_j)^2h_{ijm}^2
=\sum_{i,j,m}(\lambda_i^2\lambda_j^2-\lambda_i^3\lambda_j)h_{ijm}^2.
\]
Consequently,
\[
\langle P,\calR\rangle=\frac12D_*+\sum_{ijk}(\lambda_i^2\lambda_j^2-\lambda_i^3\lambda_j)h_{ijk}^2.
\]
A second contraction of \eqref{eq:linearCH}, now with $\lambda_i^2h_{ijk}$, gives
\[
\sum_{ijk}\lambda_i^2\lambda_j^2h_{ijk}^2=D_*+\sigma_2B-2dC,
\quad\sum_{ijk}\lambda_i^3\lambda_jh_{ijk}^2=HB_3-\sigma_2B+dC.
\]
It follows that
\begin{equation}\label{eq:P-R-intermediate}
\langle P,\calR\rangle=\frac32D_*+2\sigma_2B-3dC-HB_3.
\end{equation}
Substitute \eqref{eq:CH-contract}, \eqref{eq:Dstar}, $\sigma_2=(H^2-S)/2$, and $3d=f_3+H^3/2-3HS/2$. Collecting the coefficients of $A,C,N$ gives exactly \eqref{eq:F-Lambda}.
\end{proof}

For the commutation correction $\calC=\nabla^2h-U$, Ricci commutation in a principal frame gives
\[
h_{abk\ell}-h_{ab\ell k}
=\lambda_a\lambda_b(\lambda_k-\lambda_\ell)
(\delta_{a\ell}\delta_{bk}+\delta_{ak}\delta_{b\ell}).
\]
Since the first three indices of $h_{ijkl}$ are symmetric, its six entries with index multiset $(i,i,j,j)$ have only two possible values, $h_{iijj}$ and $h_{jjii}$. Their average is $U_{iijj}$, so
\[
\calC_{iijj}=h_{iijj}-U_{iijj}
=\frac12(h_{iijj}-h_{jjii})
=\frac12\lambda_i\lambda_j(\lambda_i-\lambda_j).
\]
All other index patterns are fully symmetric by Ricci commutation. For $i<j$, write $a_{ij}=\frac12\lambda_i\lambda_j(\lambda_i-\lambda_j)$. The six entries in the pair block $(i,i,j,j)$ are
\[
\calC_{iijj}=\calC_{ijij}=\calC_{jiij}=a_{ij},\quad
\calC_{jjii}=\calC_{jiji}=\calC_{ijji}=-a_{ij}.
\]
For the fully symmetric $\calR$, the only potentially nonzero components are also pair blocks, and all six permutations have the common value
\[
\calR_{iijj}=-\frac14\lambda_i\lambda_j(\lambda_i-\lambda_j)^2.
\]
The entries with four equal indices vanish, and summation over the six permutations in each pair gives
\begin{equation}\label{eq:correction-norms}
\begin{gathered}
c_0:=|\calC|^2=\frac32(Sf_4-f_3^2),\qquad
c_1:=\sum_{ijkl}\lambda_l\calC_{ijkl}^2=\frac34(Sf_5-f_3f_4),\\
r_0:=|\calR|^2=\frac38(Sf_6-4f_3f_5+3f_4^2),\\
D_0:=\frac12(Sh^2-f_3h)=-L_h\calC,\qquad
Y_0:=L_h\calR=\frac14(2f_3h^2-f_4h-Sh^3).
\end{gathered}
\end{equation}
Counting the three permutations ending in $i$ and the three ending in $j$ gives
\[
\begin{aligned}
c_1&=3\sum_{i<j}(\lambda_i+\lambda_j)a_{ij}^2\\
&=\frac34\sum_{i<j}\lambda_i^2\lambda_j^2
(\lambda_i-\lambda_j)(\lambda_i^2-\lambda_j^2)
=\frac34(Sf_5-f_3f_4).
\end{aligned}
\]
For example, $c_0=6\sum_{i<j}a_{ij}^2$ and
$Sf_4-f_3^2=\sum_{i<j}\lambda_i^2\lambda_j^2(\lambda_i-\lambda_j)^2$; the $r_0$ formula follows similarly from the fourth powers of the differences. The opposite signs within each pair block give $\langle U,\calC\rangle=0$. With the last-index weights retained, their contributions instead differ by $\lambda_j-\lambda_i$, so
\begin{equation}\label{eq:weighted-cross}
\langle U,\calC\rangle=0,\qquad
\sum_{ijkl}\lambda_lU_{ijkl}\calC_{ijkl}
=3\sum_{i<j}(\lambda_j-\lambda_i)a_{ij}U_{iijj}
=\langle U,\calR\rangle.
\end{equation}

Differentiating $S$ twice gives $L_h(\nabla^2h)=-G$, hence $L_hU=-G+D_0$. Moreover, the first three indices of $h_{ijkl}$ are symmetric, so exchanging dummy indices shows that the three pairings in $P$ have identical contractions with $\nabla^2h$. Thus
\[
\begin{aligned}
\mathscr J&=\frac13\sum_{ijklm}h_{ijkl}
\big(h_{ijm}h_{klm}+h_{ikm}h_{jlm}+h_{ilm}h_{jkm}\big)\\
&=\langle\nabla^2h,P\rangle=\langle U,P\rangle.
\end{aligned}
\]
The last equality follows from the complete symmetry of $P$. Put
\begin{equation}\label{eq:aux-fourth}
\begin{aligned}
\Xi&=-G+D_0,\qquad Z=\frac32P-\calR,\\
M_b&=N(S-2)+3(A-2B),\\
R_b&=9HA-\frac32(S+5H^2)C
+\frac12(6f_3+5H^3-9HS)N.
\end{aligned}
\end{equation}
The identities \eqref{eq:fourth} and \eqref{eq:CH-contract} give $R_b$. The mixed terms in the weighted identity combine as
\[
-3\mathscr J+2\langle U,\calR\rangle
=-2\langle U,\tfrac32P-\calR\rangle=-2\langle U,Z\rangle.
\]
Equations \eqref{eq:pengterng} and \eqref{eq:fourth} therefore give
\begin{equation}\label{eq:U-constraints}
L_hU=\Xi,\qquad |U|^2+c_0=M_b,\qquad
|U|_h^2-2\langle U,Z\rangle+c_1-R_b=0.
\end{equation}

The ordinary trace of $U$ supplies the second tensor constraint. By the symmetry of $h_{ijkl}$ in its first three indices,
\[
4\sum_iU_{iikl}
=\sum_i(h_{iikl}+h_{ikli}+h_{klii}+h_{liik})
=2H_{,kl}+2(\Delta h)_{kl}.
\]
Thus
\begin{equation}\label{eq:trace-U-prelim}
\tr_IU=\frac12(\nabla^2H+\Delta h).
\end{equation}
Along the common limiting sequence, $X^T\to0$ and $|\nabla h|$ is bounded, so \eqref{eq:gradH} gives $\nabla^2H=h-Hh^2$. The Simons equation then yields $\Delta h=(1-S)h$, and hence
\begin{equation}\label{eq:ordinary-traces}
\Theta:=\tr_IU=\left(1-\frac S2\right)h-\frac H2h^2,
\qquad \tr_I\calR=\frac14(2Sh^2-f_3h-Hh^3).
\end{equation}
For the second equality, the pair components give
$\sum_i\calR_{iikk}=-\frac14\lambda_k\sum_i\lambda_i(\lambda_i-\lambda_k)^2
=\frac14(2S\lambda_k^2-f_3\lambda_k-H\lambda_k^3)$.
For a completely symmetric fourth-order tensor $V$, we use the minimal-norm trace decomposition of Guo and the first author \cite[Corollary~1.4]{GuoGuan2021}. Set $D=\tr_IV$ and $\tau=\tr D$. By \eqref{eq:sym-product},
\begin{equation}\label{eq:tr-product}
\tr_I(I\odot D)=\frac76D+\frac\tau6I,\qquad
\tr_I(I\odot I)=\frac53I.
\end{equation}
Consequently,
\begin{equation}\label{eq:trace-projection}
V^\circ=V-\frac67I\odot D+\frac{3\tau}{35}I\odot I,
\qquad \tr_IV^\circ=0.
\end{equation}
The trace-free part is orthogonal to $I\odot D'$ for every symmetric bilinear form $D'$. Pairing \eqref{eq:trace-projection} with $V$ therefore gives
\begin{equation}\label{eq:trace-norm}
|V|^2=|V^\circ|^2+\frac67|\tr_IV|^2
-\frac3{35}(\tr\tr_IV)^2.
\end{equation}
For a symmetric bilinear form $a$, the same superscript means $a^\circ=a-\frac13(\tr a)I$. In three dimensions $\tr((a^\circ)^4)=|a^\circ|^4/2$ by Cayley--Hamilton. Taking $V=a^\circ\odot a^\circ$ in \eqref{eq:trace-norm}, with $|V|^2=2|a^\circ|^4/3$ and $\tr_IV=2(a^\circ)^2/3$, gives
\begin{equation}\label{eq:trace-square}
\big|(a^\circ\odot a^\circ)^\circ\big|^2=\frac{18}{35}|a^\circ|^4.
\end{equation}

Adding $3H/4$ times the ordinary norm identity to the weighted identity in \eqref{eq:U-constraints} produces $(3H/4)|U|^2-2\langle U,Z\rangle$. Completing this square gives the shift $V=U-4Z/(3H)$. Its ordinary and $h$-weighted traces are
\begin{equation}\label{eq:traces-shift}
\begin{aligned}
\Psi&=\Theta-\frac4{3H}(G-\tr_I\calR),\\
Y&=\Xi-\frac4{3H}(F-Y_0),\\
\hatY&=Y-\frac17\big(H\Psi+\tr(h\Psi)I+2h\Psi+2\Psi h\big)
+\frac{\tr\Psi}{35}(HI+2h).
\end{aligned}
\end{equation}
Indeed, Lemma \ref{lem:algebra} gives $\tr_IV=\Psi$ and $L_hV=Y$. Tracing \eqref{eq:trace-projection} against $h$, using
\[
L_h(I\odot\Psi)=\frac16\big(H\Psi+\tr(h\Psi)I+2h\Psi+2\Psi h\big),
\qquad L_h(I\odot I)=\frac13(HI+2h),
\]
gives $L_hV^\circ=\hatY$.

\begin{proposition}\label{prop:obstruction}
For the limiting matrix $G$, define
\begin{equation}\label{eq:Q}
\begin{aligned}
Q_h(G):={}&\frac{3}{20H}(2N+Hf_3-S^2)^2+c_1-R_b+\frac{3H}{4}(c_0-M_b)\\
&+\frac{3H}{4}\left[\frac67|\Psi|^2-\frac3{35}(\tr\Psi)^2
+6H\langle\hatY,h^\circ\rangle-\frac{162H^2}{35}|h^\circ|^4\right]\\
&-\frac4{3H}\left[\frac34(N^2-|G|^2)-3\Lambda_h(G)+r_0\right].
\end{aligned}
\end{equation}
Put $E=I\odot I$ and $V=U-4Z/(3H)$. The two norm identities in \eqref{eq:U-constraints} give the following exact decomposition:
\begin{equation}\label{eq:two-squares}
\begin{aligned}
Q_h(G)={}&-\left|U-\frac{3\tr\Xi}{5H}E\right|_h^2
-\frac{3H}{4}\left|V^\circ-3H(h^\circ\odot h^\circ)^\circ\right|^2
\le0.
\end{aligned}
\end{equation}
\end{proposition}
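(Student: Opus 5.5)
The plan is to expand both squares on the right of \eqref{eq:two-squares} and show that the sum matches $-Q_h(G)$ term by term, using only \eqref{eq:U-constraints}, the trace data \eqref{eq:traces-shift}, and Lemma~\ref{lem:algebra}.

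\emph{Step 1: the weighted square.} Expand
\[
\Big|U-\tfrac{3\tr\Xi}{5H}E\Big|_h^2=|U|_h^2-\tfrac{6\tr\Xi}{5H}\langle U,E\rangle_h+\tfrac{9(\tr\Xi)^2}{25H^2}|E|_h^2 .
\]
Because $U$ is completely symmetric, $\langle U,E\rangle_h$ reduces to a contraction of $L_hU=\Xi$ and $\tr_IU=\Theta$ against $I$ and $h$. Both $\tr\Xi$ and $\tr(h\Theta)$ are explicit: $\tr G=N$, $\tr D_0=\tfrac12(Sf_3\!-\!f_3S)$-type expressions, and \eqref{eq:ordinary-traces}. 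The quantity $|E|_h^2$ is a constant multiple of $H$. I expect these pieces to assemble into the term $\tfrac{3}{20H}(2N+Hf_3-S^2)^2$, with $\tr\Xi=-N+\tfrac12(Sf_3\cdot\ldots)$ supplying the combination $2N+Hf_3-S^2$. I would verify this coefficient first.

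\emph{Step 2: the ordinary square.} Write $U=V+\tfrac4{3H}Z$, so that
\[
|U|^2=|V|^2+\tfrac{8}{3H}\langle V,Z\rangle+\tfrac{16}{9H^2}|Z|^2 .
\]
Substituting this into $\tfrac{3H}{4}(|U|^2-M_b+c_0)+|U|_h^2-2\langle U,Z\rangle+c_1-R_b=0$, the cross terms in $\langle U,Z\rangle$ cancel, and the result is
\[
\tfrac{3H}{4}|V|^2-\tfrac{4}{3H}|Z|^2 .
\]
Then
\[
|Z|^2=\tfrac94|P|^2-3\langle P,\calR\rangle+r_0=\tfrac34(N^2-|G|^2)-3\Lambda_h(G)+r_0
\]
by Lemma~\ref{lem:algebra}, which accounts for the last bracket in \eqref{eq:Q}.

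Next decompose $|V|^2$ by \eqref{eq:trace-norm} into $|V^\circ|^2+\tfrac67|\Psi|^2-\tfrac3{35}(\tr\Psi)^2$. Now expand
\[
|V^\circ-3H(h^\circ\odot h^\circ)^\circ|^2=|V^\circ|^2-6H\langle V^\circ,h^\circ\odot h^\circ\rangle+\tfrac{162H^2}{35}|h^\circ|^4,
\]
using \eqref{eq:trace-square}. It remains to identify the middle pairing. For a completely symmetric $V^\circ$, $\langle V^\circ,a\odot a\rangle$ is a contraction of $V^\circ$ against $a\otimes a$. Since $V^\circ$ is trace-free, replacing $h$ by $h^\circ$ inside $L_h$ yields $\langle L_hV^\circ,h^\circ\rangle=\langle\hatY,h^\circ\rangle$, where the trace part drops out. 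Combining this with the total constraint, which vanishes, gives
\[
Q_h(G)=-|\cdot|_h^2-\tfrac{3H}4|\cdot|^2 ,
\]
and nonpositivity follows from $h>0$ (so $|\cdot|_h^2\ge0$) and $H>0$, both from Lemma~\ref{lem:limit}.

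\emph{Main obstacle.} The hardest part is bookkeeping. The factor $\tfrac{3H}{4}$, the shift constant $\tfrac{3\tr\Xi}{5H}$, and the identification of $\langle V^\circ,h^\circ\odot h^\circ\rangle$ with $\langle\hatY,h^\circ\rangle$ must all be consistent. In particular, the weighted square mixes $|U|_h^2$ with $\langle U,E\rangle_h$, and these must be reconciled with the ordinary square written in $V$. I would check each scalar identity in a principal frame, reducing everything to $A,C,N,|G|^2,\tr(hG)$ and the power sums $f_m$, and confirm that the residual polynomials vanish.
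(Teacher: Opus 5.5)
Your proposal is correct and follows the paper's proof step for step: the $h$-orthogonal projection of $U$ onto $E$, completing the square with $\theta=3H/4$ via $U=V+\tfrac{4}{3H}Z$, the trace decomposition of $|V|^2$, and the identification $\langle V^\circ,(h^\circ\odot h^\circ)^\circ\rangle=\langle\hatY,h^\circ\rangle$. The only loose point is Step 1, where you should record $\langle U,E\rangle_h=\tr\Xi$, $|E|_h^2=5H/3$ and $\tr D_0=\tfrac12(S^2-Hf_3)$ (not the vanishing expression $\tfrac12(Sf_3-f_3S)$ you wrote); these give $|U|_h^2=\bigl|U-\tfrac{3\tr\Xi}{5H}E\bigr|_h^2+\tfrac{3}{5H}(\tr\Xi)^2$ with $\tr\Xi=-N+\tfrac12(S^2-Hf_3)$, which is exactly the first summand of $Q_h(G)$.
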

\begin{proof}
The metric square $E$ satisfies $E_{iiii}=1$ and $E_{iijj}=1/3$ for $i\ne j$, together with all index permutations. Since $U$ is fully symmetric and $L_hU=\Xi$, direct summation gives
\[
\langle U,E\rangle_h=\sum_{i,k}\lambda_iU_{iikk}=\tr\Xi,\qquad
|E|_h^2=\sum_i\lambda_i+\frac13\sum_{i<j}(\lambda_i+\lambda_j)=\frac{5H}{3}.
\]
The $h$-orthogonal projection of $U$ onto the span of $E$ is therefore $(3\tr\Xi/(5H))E$. Pythagoras gives
\begin{equation}\label{eq:metric-square}
|U|_h^2=\left|U-\frac{3\tr\Xi}{5H}E\right|_h^2
+\frac3{5H}(\tr\Xi)^2.
\end{equation}
As $\Xi=-G+\frac12(Sh^2-f_3h)$ and $\tr G=N$,
\[
\tr\Xi=-N+\frac12(S^2-Hf_3),
\]
so the scalar term in \eqref{eq:metric-square} is the first summand of \eqref{eq:Q}. To combine the two norm identities in \eqref{eq:U-constraints}, use
\[
\theta|U|^2-2\langle U,Z\rangle
=\theta\left|U-\theta^{-1}Z\right|^2-\theta^{-1}|Z|^2,
\qquad \theta=\frac{3H}{4}.
\]
Multiplying $|U|^2+c_0-M_b=0$ by $\theta$ and adding it to the weighted identity gives
\begin{equation}\label{eq:pre-obstruction}
0=|U|_h^2+\frac{3H}{4}\left|U-\frac4{3H}Z\right|^2
-\frac4{3H}|Z|^2+c_1-R_b+\frac{3H}{4}(c_0-M_b).
\end{equation}

Now $\tr_IV=\Psi$ and $L_hV^\circ=\hatY$ by \eqref{eq:traces-shift}. From \eqref{eq:trace-norm}, $|V|^2=|V^\circ|^2+\frac67|\Psi|^2-\frac3{35}(\tr\Psi)^2$. Put $W=(h^\circ\odot h^\circ)^\circ$. The symmetry of $V^\circ$ and $\tr_I V^\circ=0$ give
\[
\begin{aligned}
\langle V^\circ,W\rangle
&=\sum_{i,j,k,l}V^\circ_{ijkl}h^\circ_{ij}h^\circ_{kl}\\
&=\sum_{k,l}\left(\sum_{i,j}h_{ij}V^\circ_{ijkl}\right)h^\circ_{kl}
=\langle L_hV^\circ,h^\circ\rangle=\langle\hatY,h^\circ\rangle.
\end{aligned}
\]
In the second equality, the contribution from $h^\circ_{ij}=h_{ij}-H\delta_{ij}/3$ vanishes by $\tr_I V^\circ=0$.
Combining this with $|(h^\circ\odot h^\circ)^\circ|^2=18|h^\circ|^4/35$ from \eqref{eq:trace-square}, we obtain
\[
\begin{aligned}
|V|^2={}&|V^\circ-3HW|^2
+\frac67|\Psi|^2-\frac3{35}(\tr\Psi)^2\\
&+6H\langle\hatY,h^\circ\rangle-\frac{162H^2}{35}|h^\circ|^4.
\end{aligned}
\]
Finally, by \eqref{eq:three-identities} and \eqref{eq:correction-norms},
\[
|Z|^2=\frac94|P|^2-3\langle P,\calR\rangle+|\calR|^2
=\frac34(N^2-|G|^2)-3\Lambda_h(G)+r_0.
\]
Substituting these expressions and \eqref{eq:metric-square} into \eqref{eq:pre-obstruction}, the remaining terms are exactly $Q_h(G)$ as defined in \eqref{eq:Q}. This proves \eqref{eq:two-squares}, since $h>0$ and $H>0$.
\end{proof}

\begin{lemma}\label{lem:positiveQ}
Let $h>0$ be a three-dimensional symmetric endomorphism satisfying $\sigma_2(h)\ge1$, and put $N=S(S-1)$. For any symmetric matrix $G$ with $\tr G=N$, define $A=\tr(h^2G)$, $C=\tr(hG)$, $B=-2A+2HC-\sigma_2N$, and $Q_h(G)$ by \eqref{eq:Q} and the preceding algebraic formulas. Then
\[
Q_h(G)\ge0.
\]
Equality holds if and only if $h=I/\sqrt3$ and $G=0$.
\end{lemma}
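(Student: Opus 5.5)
The plan is to exploit the fact that, once $h$ is fixed, $Q_h(G)$ is an explicit quadratic polynomial in the entries of $G$ on the affine space $\{\tr G=N\}$. I will first minimize over $G$ exactly, and then prove that the resulting function of the eigenvalues of $h$ is nonnegative under $h>0$ and $\sigma_2\ge1$.

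\textbf{Step 1: the quadratic part of $Q_h$.} Work in a principal frame, $h=\operatorname{diag}(\lambda_1,\lambda_2,\lambda_3)$. Unwinding \eqref{eq:Q} with \eqref{eq:F-Lambda}, \eqref{eq:traces-shift}, \eqref{eq:ordinary-traces} and \eqref{eq:aux-fourth}, $G$ enters only through:
\begin{itemize}
\item the linear quantities $\tr G=N$, $C=\tr(hG)$ and $A=\tr(h^2G)$, including those hidden in $B$, $R_b$, $M_b$, $\Lambda_h(G)$, $F$ and $\Xi$;
\item the quadratic quantities $|G|^2$, $|\Psi|^2$ and the $hG+Gh$ pieces inside $\hatY$.
\end{itemize}
Since $\Psi=\Theta-\tfrac4{3H}(G-\tr_I\calR)$, the term $\tfrac{3H}4\cdot\tfrac67|\Psi|^2$ contributes $\tfrac{8}{7H}|G|^2$. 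The $Z$-term contributes $+\tfrac1H|G|^2$. The term $6H\langle\hatY,h^\circ\rangle$ is only linear in $G$, so it does not affect the quadratic part.

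I therefore expect the quadratic part to be a positive definite form $c(h)\,|G|^2$, up to the trace correction $(\tr\Psi)^2$. That correction is constant on $\{\tr G=N\}$, so $Q_h$ is strictly convex there. Off-diagonal entries $G_{kl}$ with $k\ne l$ occur only through these positive squared norms; they do not occur in $A$ or $C$. Hence the minimizer is diagonal, and it suffices to treat $G=\operatorname{diag}(g_1,g_2,g_3)$ with $\sum g_i=N$.

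\textbf{Step 2: exact minimization over $G$.} With $G$ diagonal, $Q_h$ is a strictly convex quadratic in $(g_1,g_2,g_3)$ subject to one linear constraint. A Lagrange multiplier gives the minimizer explicitly. Note that positive semidefiniteness of $G$ may be dropped, since minimizing over all symmetric $G$ only weakens what must be shown. The minimal value $m(\lambda)$ is a symmetric rational function of $\lambda$, with denominator a power of $H$. I will express $m$ in terms of $H$, $S$ and $\sigma_2$ (equivalently $H$, $S$, $d$), using $f_3,f_4,f_5,f_6$ via Newton's identities, with $N=S(S-1)$ substituted.

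\textbf{Step 3: nonnegativity of $m(\lambda)$.} The constraints are $\lambda_i>0$ and $\sigma_2\ge1$. These give $H^2=S+2\sigma_2\ge S+2$, and together with $H^2\le3S$ they give $S\ge1$. At $h=I/\sqrt3$ we have $S=1$, $N=0$, and all correction terms $c_0,c_1,r_0$ vanish. This is where equality should occur, with the minimizer $G=0$.

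To prove $m\ge0$, I would first try homogeneity. Scale $h=t\hat h$ with $\hat h$ normalized, say $\sigma_2(\hat h)=1$. Then $m$ becomes a polynomial in $t\ge1$ whose coefficients are functions of the normalized shape. The goal is to show that each coefficient, or suitable groupings of them, is nonnegative, for instance by writing them as sums of squares in the differences $\lambda_i-\lambda_j$ plus multiples of $t^2-1\ge0$ and $S-1\ge0$.

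\textbf{Equality case.} Equality in $m\ge0$ should force $t=1$ and $\lambda_i$ all equal, that is $h=I/\sqrt3$. Uniqueness of the minimizer from strict convexity then forces $G=0$.

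\textbf{Expected obstacle.} Step 3 is the main obstacle: it is a genuinely multivariable polynomial inequality of high degree. If a clean sum-of-squares decomposition is not evident, my fallback is to fix $H$ and $S$ and view $m$ as a function of $d=\det h$ alone. Since $m$ is affine or quadratic in $d$, it would suffice to check the endpoints of the admissible $d$-interval. Those endpoints correspond to two equal eigenvalues, or to $\sigma_2=1$ on the boundary, which reduces the problem to one-parameter inequalities.
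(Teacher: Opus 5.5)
\textbf{There is a genuine gap: the nonnegativity of the minimum value, which is the entire content of the lemma, is not proved.}

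Your Steps 1 and 2 match the paper. The only terms quadratic in $G$ are $\frac{3H}{4}\cdot\frac67|\Psi|^2$ and the $|Z|^2$ term, which together give $\frac{15}{7H}|G|^2$. Off-diagonal entries of $G$ (in a principal frame) enter only through squares. Completing the square gives $Q_h(G)=\frac{15}{7H}|G-G_*|^2+Q_h(G_*)$, with $G_*$ diagonal in a principal frame. But for Step 3 you give no argument, only candidate strategies (``I would first try'', ``should force''). So both $Q_h(G_*)\ge0$ and the equality characterization remain unproved.

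Your fallback would also fail as stated. Fix $H$ and $S$. Then $s=S/H^2$ and $r=H^{-2}$ are fixed, and $z=f_3/H^3=1-3\sigma_2/H^2+3d/H^3$ is affine in $d$ with positive slope. In the paper's normalization the minimized value is $H^7\phi(s,z,r)$, and the $z^2$-coefficient of $\phi$ is $(45s+224)/80>0$. So $m$ is a \emph{convex} quadratic in $d$. Checking the endpoints of the admissible $d$-interval therefore proves nothing unless you also show the vertex lies outside that interval.

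Your description of the endpoints is also off. With $H,S$ fixed, $\sigma_2=(H^2-S)/2$ is fixed, so ``$\sigma_2=1$'' is not an endpoint in $d$. The actual endpoints are a double eigenvalue or a vanishing eigenvalue. That still leaves a two-parameter family (shape and scale subject to $\sigma_2\ge1$), not a one-parameter inequality.

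What is missing is the paper's monotonicity argument on a relaxed region:
\begin{itemize}
\item Normalize: $\mu_i=\lambda_i/H$, $s=\sum\mu_i^2$, $z=\sum\mu_i^3$, $r=H^{-2}$.
\item Relax the constraints to $\frac13\le s<1$ and $z\ge s^2$ (Cauchy--Schwarz, using $\mu_i>0$), together with $0<r\le q=\frac{1-s}2$, which is exactly $\sigma_2\ge1$. Your scaling parameter $t\ge1$ is the same as $r\le q$, so your homogeneity idea points the right way; the decisive sign computations are what is absent.
\item Show $\partial_z\phi>0$ on the whole region. Using $z-s^2\ge0$ and $r\le\frac{1-s}2$, one gets $80\,\partial_z\phi\ge 90s^3-158s^2+417s+3>0$. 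This pushes the minimum to $z=s^2$.
\item Show $\partial_r\phi(s,s^2,r)<0$. This pushes the minimum to $r=q$.
\item At that corner, $\phi=\frac{x}{4320}\bigl[1160-629x+8x^2(489-430x)+320x^4\bigr]\ge0$ with $x=\frac{3s-1}2\in[0,1)$.
\end{itemize}
The strict derivative signs and the explicit factor $x$ are also what yield the equality case. They force $z=s^2$, $r=q$ and $x=0$, hence $s=r=\frac13$, so $h=I/\sqrt3$, $N=0$ and $G=G_*=0$.
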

\begin{proof}
Fix $h$ and allow $G$ to range over all symmetric matrices with $\tr G=N$. The quantities $H,S,f_m,N$ and all contractions involving $\calR$ alone are then fixed. By \eqref{eq:traces-shift},
\[
\Psi=\Psi_0-\frac4{3H}G,\qquad
\Psi_0=\Theta+\frac4{3H}\tr_I\calR.
\]
In particular, $\tr\Psi=\tr\Psi_0-4N/(3H)$ is fixed. The formulas for $\Theta$ and $\tr_I\calR$ in \eqref{eq:ordinary-traces} first give
\[
\Psi_0=\left(1-\frac S2-\frac{f_3}{3H}\right)h
+\left(-\frac H2+\frac{2S}{3H}\right)h^2-\frac13h^3.
\]
Replacing $h^3$ by $Hh^2-\sigma_2h+(\det h)I$ and using $\sigma_2=(H^2-S)/2$, we obtain
\begin{equation}\label{eq:Psi0-exp}
\Psi_0=\frac{4S-5H^2}{6H}h^2
+\left(1+\frac{H^2}{6}-\frac{2S}{3}-\frac{f_3}{3H}\right)h
-\frac{\det h}{3}I.
\end{equation}
The two terms in \eqref{eq:Q} that are quadratic in $G$ expand as
\[
\begin{aligned}
\frac{3H}{4}\frac67|\Psi|^2
&=\frac8{7H}|G|^2-\frac{12}{7}\langle\Psi_0,G\rangle
+\frac{9H}{14}|\Psi_0|^2,\\
-\frac4{3H}\left[\frac34(N^2-|G|^2)-3\Lambda_h(G)+r_0\right]
&=\frac1H|G|^2+\frac4H\Lambda_h(G)
-\frac{N^2}{H}-\frac{4r_0}{3H}.
\end{aligned}
\]
Their quadratic contributions add up to $15|G|^2/(7H)$. Moreover, with $A=\tr(h^2G)$ and $C=\tr(hG)$, equation \eqref{eq:Psi0-exp} yields
\begin{equation}\label{eq:Psi0-linear}
\langle\Psi_0,G\rangle
=\frac{4S-5H^2}{6H}A
+\left(1+\frac{H^2}{6}-\frac{2S}{3}-\frac{f_3}{3H}\right)C
-\frac{\det h}{3}N.
\end{equation}
The trace constraint makes $(\tr\Psi)^2$ independent of $G$. To determine the remaining linear term, let $\delta G$ be a symmetric variation with $\tr\delta G=0$. The definitions of $\Xi,F,\Psi,Y$ give
\[
\begin{aligned}
\delta\Xi&=-\delta G,\qquad
\delta F=H\delta G-h\delta G-\delta Gh+\tr(h\delta G)I,\\
\delta\Psi&=-\frac4{3H}\delta G,\qquad
\delta Y=-\frac73\delta G+\frac4{3H}(h\delta G+\delta Gh)
-\frac4{3H}\tr(h\delta G)I.
\end{aligned}
\]
Since $\delta(\tr\Psi)=0$ and $\delta\tr(h\Psi)=-4\tr(h\delta G)/(3H)$, the definition of $\hatY$ in \eqref{eq:traces-shift} yields
\begin{equation}\label{eq:hatY-variation}
\delta\hatY=-\frac{15}{7}\delta G
+\frac{12}{7H}(h\delta G+\delta Gh)
-\frac8{7H}\tr(h\delta G)I.
\end{equation}
As $h^\circ=h-HI/3$ and $\tr\delta G=0$,
\[
\begin{aligned}
\langle\delta G,h^\circ\rangle&=\tr(h\delta G),\\
\langle h\delta G+\delta Gh,h^\circ\rangle
&=2\tr(h^2\delta G)-\frac{2H}{3}\tr(h\delta G),
\end{aligned}
\]
and therefore
\[
\delta\langle\hatY,h^\circ\rangle
=\frac{24}{7H}\tr(h^2\delta G)-\frac{23}{7}\tr(h\delta G).
\]
Since $\hatY$ is affine in $G$, this determines its linear contribution. The Cayley--Hamilton contraction $B=-2A+2HC-\sigma_2N$ also gives
\[
M_b=N(S-2)+3(A-2B)=15A-12HC+N(S-2+6\sigma_2).
\]
Apart from $|G|^2$, the dependence of $Q_h$ on $G$ is through $A=\tr(h^2G)$ and $C=\tr(hG)$. Their coefficients are obtained by combining $-12\langle\Psi_0,G\rangle/7$, $9H^2\langle\hatY,h^\circ\rangle/2$, $-R_b-3HM_b/4$, and $4\Lambda_h(G)/H$. Equations \eqref{eq:Psi0-linear}, \eqref{eq:aux-fourth}, \eqref{eq:F-Lambda}, and the variation above give
\[
\begin{aligned}
b&=\frac{10H^2-8S}{7H}+\frac{108H}{7}-\frac{81H}{4}
+\frac{7S-5H^2}{2H}\\
&=-\frac{3(55H^2-22S)}{28H},
\end{aligned}
\]
while the coefficient of $C$ is
\[
\begin{aligned}
c={}&\frac{-12-2H^2+8S+4f_3/H}{7}-\frac{207H^2}{14}\\
&+\frac{3S+33H^2}{2}+\frac{4H^3-7HS+f_3}{2H}\\
={}&\frac{3(16H^3-4HS-8H+5f_3)}{14H}.
\end{aligned}
\]
Thus, with a scalar $\gamma(h)$ independent of $G$, we have
\begin{equation}\label{eq:G-quadratic}
Q_h(G)=\frac{15}{7H}|G|^2+b\,\tr(h^2G)+c\,\tr(hG)+\gamma(h).
\end{equation}

Since $H>0$, the quadratic part of \eqref{eq:G-quadratic} is positive definite on the trace hyperplane. For a trace-preserving variation $\delta G$, the first variation is
\[
\delta Q_h(G)=\left\langle\frac{30}{7H}G+bh^2+ch,\delta G\right\rangle,
\qquad \tr\delta G=0.
\]
As this variation must vanish for every trace-free $\delta G$, the unique critical point $G_*$ satisfies
\[
\left(\frac{30}{7H}G_*+bh^2+ch\right)^\circ=0,
\qquad \tr G_*=N.
\]
Taking trace-free parts gives
\[
(G_*)^\circ=-\frac{7H}{30}\big[b(h^2)^\circ+ch^\circ\big].
\]
Since $\tr G_*=N$, we have $G_*=(N/3)I+(G_*)^\circ$. Substituting $b,c$, we obtain
\begin{equation}\label{eq:Gstar}
G_* =\frac N3I+\frac{55H^2-22S}{40}(h^2)^\circ
-\frac{16H^3-4HS-8H+5f_3}{20}h^\circ.
\end{equation}
For any $G$ of trace $N$, the matrix $G-G_*$ is trace-free, and the cross term in the expansion of $Q_h(G)-Q_h(G_*)$ vanishes by the critical-point equation. Hence
\begin{equation}\label{eq:completeG}
Q_h(G)=\frac{15}{7H}|G-G_*|^2+Q_h(G_*).
\end{equation}
To evaluate the minimum, take $G_0=(N/3)I$ as a reference point in \eqref{eq:G-quadratic}:
\[
\begin{aligned}
Q_h(G)={}&Q_h(G_0)+\frac{15}{7H}|G-G_0|^2\\
&+\big\langle b(h^2)^\circ+ch^\circ,\,G-G_0\big\rangle.
\end{aligned}
\]
Minimizing this expression, or substituting $G_*$, yields the exact scalar identity
\begin{equation}\label{eq:Q-at-minimum}
Q_h(G_*)=Q_h(G_0)-\frac{7H}{60}
\big|b(h^2)^\circ+ch^\circ\big|^2.
\end{equation}
At $G_0$ the contractions entering \eqref{eq:Q} simplify to
\[
|G_0|^2=\frac{N^2}{3},\qquad
A_0=\frac{NS}{3},\quad C_0=\frac{NH}{3},\quad
B_0=\frac{\sigma_2N}{3},\quad
F(G_0)=\frac N6(HI-h).
\]
The square in \eqref{eq:Q-at-minimum} uses only
\[
\begin{gathered}
|(h^2)^\circ|^2=f_4-\frac{S^2}{3},\qquad
\langle(h^2)^\circ,h^\circ\rangle=f_3-\frac{HS}{3},\qquad
|h^\circ|^2=S-\frac{H^2}{3}.
\end{gathered}
\]
It remains to prove that $Q_h(G_*)\ge0$. As $H>0$, write
\[
M=\frac hH,\qquad
\mu_i=\frac{\lambda_i}{H},\qquad
s=\sum_i\mu_i^2,\quad z=\sum_i\mu_i^3,\quad
r=H^{-2},\quad q=\frac{1-s}{2}.
\]
The normalized eigenvalues satisfy $\sum_i\mu_i=1$ and $\mu_i>0$, so $1/3\le s<1$. Weighted Cauchy--Schwarz gives
\[
s^2=\left(\sum_i\mu_i^{3/2}\mu_i^{1/2}\right)^2
\le\left(\sum_i\mu_i^3\right)\left(\sum_i\mu_i\right)=z.
\]
Moreover $\sigma_2(h)=H^2(1-s)/2=H^2q\ge1$ gives $0<r\le q$. Thus it suffices to work on
\[
\frac13\le s<1,\qquad z\ge s^2,\qquad 0<r\le q.
\]

Set $m_j=\tr M^j$, with $m_0=3$, $m_1=1$, $m_2=s$ and $m_3=z$. The three-dimensional Cayley--Hamilton identity and Newton's formula for the determinant give
\[
M^3=M^2-qM+\frac{1-3s+2z}{6}I,
\qquad m_j=m_{j-1}-qm_{j-2}+\frac{1-3s+2z}{6}m_{j-3}.
\]
In particular,
\begin{equation}\label{eq:normalized-moments}
\begin{aligned}
m_4&=\frac{s^2}{2}-s+\frac{4z}{3}+\frac16,\\
m_5&=\frac{5sz-5s+5z+1}{6},\\
m_6&=\frac{s^3}{4}-\frac{3s^2}{4}+sz-\frac s4+\frac{z^2+z}{3}+\frac1{12}.
\end{aligned}
\end{equation}
Thus $f_j=H^jm_j$, and $N/H^4=s(s-r)$. To evaluate \eqref{eq:Q-at-minimum}, set
\[
\alpha=\frac{55-22s}{40},\qquad
\beta=-\frac{16-4s-8r+5z}{20}.
\]
Since $-7H[b(h^2)^\circ+ch^\circ]/30
=H^4[\alpha(M^2)^\circ+\beta M^\circ]$, \eqref{eq:Q-at-minimum} becomes
\[
\begin{aligned}
\frac{Q_h(G_*)}{H^7}={}&\frac{Q_h(G_0)}{H^7}
-\frac{15}{7}\Big[\alpha^2\left(m_4-\frac{s^2}{3}\right)\\
&\qquad+2\alpha\beta\left(z-\frac{s}{3}\right)
+\beta^2\left(s-\frac13\right)\Big].
\end{aligned}
\]
The expressions for $A_0,B_0,C_0,F(G_0)$, together with $f_j=H^jm_j$, likewise reduce $Q_h(G_0)/H^7$ to $s,z,r$. Substitution into \eqref{eq:Q-at-minimum}, with $\phi(s,z,r)=H^{-7}Q_h(G_*)$, gives
\begin{equation}\label{eq:phi-explicit}
\begin{aligned}
\phi(s,z,r)={}&\frac{45s+224}{80}z^2
+\frac{(368s-496)r-422s^2-15s+251}{80}z\\
&+\frac{4s^2-16s+1}{20}r^2
+\frac{-4s^3-49s^2+63s-8}{20}r\\
&+\frac{-220s^4+1932s^3-2547s^2+462s-95}{640}.
\end{aligned}
\end{equation}
To minimize $\phi$ on the region above, we first fix $s,r$ and vary $z\ge s^2$. Differentiation and separation of $z-s^2$ give
\begin{equation}\label{eq:phi-dz}
\begin{aligned}
80\,\partial_z\phi
={}&2(45s+224)(z-s^2)+(368s-496)r\\
&+90s^3+26s^2-15s+251\\
\ge{}&90s^3-158s^2+417s+3>0.
\end{aligned}
\end{equation}
Here $z-s^2\ge0$; since $368s-496<0$ and $r\le(1-s)/2$, the $r$-term is bounded below by $(368s-496)(1-s)/2$. The last polynomial is positive because $417s-158s^2=s(417-158s)>0$ for $1/3\le s<1$. Thus $\phi(s,z,r)\ge\phi(s,s^2,r)$.

Next fix $s$ and vary $r$ in $0<r\le q$, with $z=s^2$. Writing $x=(3s-1)/2\in[0,1)$, we obtain
\begin{equation}\label{eq:phi-dr}
\begin{aligned}
-270\,\partial_r\phi(s,s^2,r)
={}&40+207x+x^2(510-352x)\\
&+27r(16s-4s^2-1)>0.
\end{aligned}
\end{equation}
Indeed, $510-352x>0$, and $16s-4s^2-1>0$ for $s\ge1/3$ in the given range. Therefore $\phi(s,s^2,r)\ge\phi(s,s^2,q)$. Substituting $r=q=(1-x)/3$ in \eqref{eq:phi-explicit} gives
\begin{equation}\label{eq:phi-end}
\phi(s,s^2,q)=\frac{x}{4320}\left[1160-629x+8x^2(489-430x)+320x^4\right]\ge0.
\end{equation}
Both $1160-629x$ and $489-430x$ are positive on $0\le x<1$. We have proved $\phi\ge0$, and \eqref{eq:completeG} gives $Q_h(G)\ge0$.

If $Q_h(G)=0$, then \eqref{eq:completeG}, the nonnegative lower bound for $\phi$, and $H>0$ force $G=G_*$ and equality in both monotonicity comparisons. The strict derivative signs and \eqref{eq:phi-end} give $z=s^2$, $r=q$, and $x=0$. Thus $s=r=1/3$. As $\sum\mu_i=1$ and $\sum\mu_i^2=1/3$, equality in Cauchy--Schwarz forces $\mu_1=\mu_2=\mu_3=1/3$. Hence $H=\sqrt3$ and $h=I/\sqrt3$. This gives $S=1$, $N=0$, and $G_*=0$. Conversely, these values give equality by substitution.
\end{proof}

\begin{proof}[Proof of Theorem \ref{thm:main}]
The identity \eqref{eq:constantS} excludes $0<S<1$. Suppose $S>1$. At the common limit of Lemma \ref{lem:limit}, one has $h>0$ and $\sigma_2(h)\ge1$. Proposition \ref{prop:obstruction} gives $Q_h(G)\le0$, whereas Lemma \ref{lem:positiveQ} gives $Q_h(G)\ge0$, with equality only for $h=I/\sqrt3$, hence $S=1$. This contradicts $S>1$.

Thus $S\in\{0,1\}$, and \eqref{eq:constantS} gives $\nabla h=0$.
By Lawson's classification of Euclidean hypersurfaces with parallel
second fundamental form \cite{Lawson1969}, the image lies in an affine
hyperplane if $S=0$, or in a round generalized cylinder with spherical
factor $\mathbb S^k(r)$, $1\le k\le3$, if $S=1$.
In the latter case, $S=k/r^2=1$ gives $r=\sqrt{k}$.
The self-shrinker equation forces the hyperplane or the cylinder axis
to pass through the origin.

If we passed to a normal double cover, the original immersion has
the same image. The original immersion is therefore a local isometry
into the corresponding model and, by completeness, a Riemannian
covering onto it.
If $M$ is compact, the model must be $\mathbb S^3(\sqrt3)$.
Since this sphere is simply connected and $M$ is connected,
the covering is one-sheeted.
\end{proof}

\end{document}